\documentclass[a4paper]{article}
\usepackage{a4wide}
\usepackage{amsmath, amssymb, amsthm}
\usepackage{thmtools}
\usepackage{microtype,hyperref}
\usepackage{tikz}
\usetikzlibrary{arrows}

\newcommand{\blok}[1]{\leavevmode\unskip\penalty9999 \hbox{}\nobreak\hfill\quad\hbox{$#1$}}
\newcommand{\klaar}{\blok{\blacksquare}}


\declaretheorem[style=plain,numberwithin=section]{theorem}
\declaretheorem[style=plain,sibling=theorem]{lemma}
\declaretheorem[style=plain,sibling=theorem]{corollary}

\declaretheorem[style=definition,prefoothook=\klaar,sibling=theorem]{example}
\declaretheorem[style=definition,sibling=theorem]{remark}


\newcommand{\Z}{\ensuremath{\mathbb{Z}}}

\newcommand{\R}{\ensuremath{\mathbb{R}}}
\newcommand{\transp}{\mathsf{T}}
\newcommand{\clattice}{\mathcal{C}}

\newcommand{\one}{\ensuremath{\mathbf{1}}}

\DeclareMathOperator{\tw}{tw}
\DeclareMathOperator{\Div}{Div}
\DeclareMathOperator{\Prin}{Prin}
\DeclareMathOperator{\Col}{Col}

\DeclareMathOperator{\todiv}{div}

\DeclareMathOperator{\dgon}{gon}
\DeclareMathOperator{\sgon}{sgon}

\DeclareMathOperator{\supp}{supp}
\DeclareMathOperator{\rank}{rank}

\newcommand*{\B}{\mathcal{B}}
\newcommand*{\order}[1]{\left|\left|#1\right|\right|}

\tikzset{punt/.style={fill,circle,inner sep=1.3pt},
         div/.style={draw,circle,minimum width=6mm},
         >=stealth'}

\author{Josse van Dobben de Bruyn\footnote{Leiden University} \and Dion Gijswijt\footnote{Delft University of Technology}}
\title{Treewidth is a lower bound on graph gonality}

\begin{document}
\maketitle

\begin{abstract}
We prove that the (divisorial) gonality of a finite connected graph is lower bounded by its treewidth. We show that equality holds for grid graphs and complete multipartite graphs. 

We prove that the treewidth lower bound also holds for \emph{metric graphs} by constructing for any positive rank divisor on a metric graph $\Gamma$ a positive rank divisor of the same degree on a subdivision of the underlying graph.

Finally, we show that the treewidth lower bound also holds for a related notion of gonality defined by Caporaso and for stable gonality as introduced by Cornelissen et al.
\end{abstract}

\section{Introduction and notation}
Let $G=(V,E)$ be a graph and consider the following chip firing game on $G$. At any stage, we have a chip configuration consisting of a nonnegative number of chips at each vertex of $G$. We may go from one chip configuration to another by a sequence of moves. A move consists of choosing a subset $U\subseteq V$ and moving one chip from $u$ to $v$ for every edge $uv$ with $u\in U$ and $v\in V\setminus U$. For the move to be possible, every vertex $u\in U$ must have at least as many chips as it has edges to vertices in $V\setminus U$. Observe that a move corresponding to a subset $U$ can be reversed by subsequently making the move corresponding to the complementary set $V\setminus U$.

A chip configuration is winning if for every vertex $v$ there is a sequence of moves that results in a configuration with at least one chip on $v$. The gonality $\dgon(G)$ of $G$ is the smallest number of chips in a winning chip configuration.

The main result of this paper is to show that the gonality of a connected graph is lower bounded by its treewidth. This result was conjectured in \cite{vanDobbendeBruyn2012}. 

The remainder of Section 1 is devoted to preliminaries, including basic notation and terminology related to graphs, divisors and treewidth. In Section 2, we state and prove the main theorem. In Section 3, we consider some families of graphs for which treewidth equals gonality. These include: trees, grids and complete multipartite graphs. In Section 4, we briefly review divisor theory for \emph{metric} graphs. We show that the gonality of a metric graph is lower bounded by the gonality of a subdivision of the underlying graph. Hence, the tweewidth is also a lower bound for metric graphs. In Section 5, we discuss some related notions of gonality defined in terms of harmonic morphisms, and show that there the treewidth is also a lower bound.
     
\subsection{Graphs}
The graphs in this paper will be finite and undirected (unless stated otherwise). We allow our graphs to have multiple (parallel) edges, but no loops. We will almost exclusively consider \emph{connected} graphs. For a graph $G$, we denote by $V(G)$ and $E(G)$ the set of vertices and edges of $G$, respectively. By an edge $uv$, we mean an edge with ends $u$ and $v$. For (not necessarily disjoint) subsets $U,W\subseteq V(G)$, we denote by $E(U,W)$ the set of edges with an end in $U$ and an end in $W$. For vertices $u$ and $v$, we use the abbreviations $E(u,v):=E(\{u\},\{v\})$ and $E(u):=E(\{u\},V\setminus \{u\})$. The degree of a vertex $v$ equals the number of edges with $v$ as an endpoint and is denoted by $d_G(v):=|E(v)|$. For a subset $U\subseteq V(G)$, we denote by $G[U]:=(U,E(U,U))$ the subgraph of $G$ \emph{induced by $U$}. That is, $G[U]$ is the graph with vertex set $U$ and edge set consisting of the edges of $G$ with both ends in $U$.

Let $G=(V,E)$ be a \emph{connected} graph. We can make $G$ into an \emph{oriented graph} by, for every edge $e$, assigning one end to be the \emph{head} of $e$ and the other end to be the \emph{tail} of $e$. We view the edge $e$ as oriented from its tail to its head. For a cycle $C$ in $G$, we then denote by $\chi_C\in \R^{E}$ the signed incidence vector defined by
\begin{equation}
\chi_C(e)=\begin{cases}
1& \text{if $e$ is traversed in forward direction by $C$,}\\
-1&\text{if $e$ is traversed in backward direction by $C$,}\\
0&\text{otherwise.}
\end{cases}
\end{equation}    
Similarly, we write $\chi_P$ for the signed incidence vector of a path $P$.

The \emph{incidence matrix} $M=M(G)\in \R^{V\times E}$ of $G$ is defined by, for every $v\in V$ and $e\in E$, setting 
\begin{equation}
M_{v,e}:=
\begin{cases}
1&\text{if $v$ is the head of $e$,}\\
-1&\text{if $v$ is the tail of $e$,}\\
0&\text{otherwise.}
\end{cases}
\end{equation}
\noindent The matrix $Q=Q(G):=MM^\transp$ is the \emph{Laplacian} of $G$ and it is independent of the chosen orientation. Indeed, for any two vertices $u$ and $v$, $Q_{vv}$ equals the degree of vertex $v$ and $Q_{uv}$ equals $-|E(u,v)|$. The \emph{cut lattice} of $G$ is the set $\clattice(G):=\Z^{E}\cap \Col(M^\transp)$ of integral vectors in the column space of the transpose of $M$. 

The following two lemma's are well-known, see for example \cite{GodsilRoyle2001}. For the sake of the reader, we will give the short proofs.
 
\begin{lemma}\label{clattice}
Let $f\in \Z^{E}$. Then the following are equivalent:
\begin{itemize}
\item[i)] $f$ is in the cut lattice of $G$,
\item[ii)] $f^\transp\chi_C=0$ for every cycle $C$ in $G$,
\item[iii)] $f=M^\transp x$ for some $x\in \Z^{V}$.
\end{itemize}
\end{lemma}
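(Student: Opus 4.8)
The plan is to prove the cyclic chain of implications $(iii) \Rightarrow (i) \Rightarrow (ii) \Rightarrow (iii)$. The implication $(iii) \Rightarrow (i)$ is immediate from the definitions: if $f = M^\transp x$ with $x \in \Z^{V}$, then $f$ lies in $\Col(M^\transp)$ and is integral by assumption, so $f \in \clattice(G)$. For $(i) \Rightarrow (ii)$, the crucial observation is that $M\chi_C = 0$ for every cycle $C$. Indeed, each vertex $v$ lying on $C$ is incident to exactly one edge by which $C$ enters $v$ and one by which $C$ leaves $v$, and a short check of the sign combinations shows that these two terms cancel in $(M\chi_C)_v$, while vertices off $C$ contribute $0$. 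Since $f \in \Col(M^\transp)$ we may write $f = M^\transp x$ for some $x \in \R^{V}$, and then $f^\transp \chi_C = x^\transp M \chi_C = 0$ for every cycle $C$.

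The substantive direction is $(ii) \Rightarrow (iii)$, which I would handle by constructing the integral potential $x$ explicitly from a spanning tree. Fix a spanning tree $T$ of the connected graph $G$ and a root $r \in V$. For each vertex $v$, let $P_v$ be the unique path in $T$ from $r$ to $v$, and define $x(v) := f^\transp \chi_{P_v}$; then $x(r) = 0$ and $x(v) \in \Z$ because both $f$ and $\chi_{P_v}$ are integral. I claim $f = M^\transp x$, i.e.\ $f(e) = x(\operatorname{head}(e)) - x(\operatorname{tail}(e))$ for every edge $e$. For a tree edge this follows by telescoping directly from the definition of $x$. For a non-tree edge $e$ with tail $u$ and head $v$, the edge $e$ together with the tree path from $v$ back to $u$ forms a fundamental cycle $C_e$, whose incidence vector satisfies $\chi_{C_e} = \chi_e + \chi_{P_u} - \chi_{P_v}$, since the shared initial segment of $P_u$ and $P_v$ up to their last common vertex cancels. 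Applying hypothesis $(ii)$ to $C_e$ then gives
\begin{equation}
0 = f^\transp \chi_{C_e} = f(e) + x(u) - x(v),
\end{equation}
so that $f(e) = x(v) - x(u) = (M^\transp x)(e)$, as required.

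The main obstacle is precisely the $(ii) \Rightarrow (iii)$ construction: verifying that $x$ is a well-defined integral vector realizing $f$ as the coboundary $M^\transp x$. Anchoring the definition of $x$ to the unique paths in a fixed spanning tree sidesteps the usual difficulty, namely proving path-independence of $f^\transp \chi_P$ by decomposing a closed walk into simple cycles; with tree paths the value $x(v)$ is well-defined by construction, and the single hypothesis $f^\transp \chi_{C_e} = 0$ on each fundamental cycle $C_e$ is exactly what is needed to recover $f$ on the non-tree edges. Integrality of $x$ comes for free from $f \in \Z^{E}$, closing the argument.
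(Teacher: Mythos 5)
Your proposal is correct and follows essentially the same route as the paper: the trivial implication $(iii)\Rightarrow(i)$, the observation $M\chi_C=0$ for $(i)\Rightarrow(ii)$, and for $(ii)\Rightarrow(iii)$ the definition $x(v):=f^\transp\chi_{P_v}$ along tree paths from a root of a spanning tree. You merely spell out the fundamental-cycle verification for non-tree edges that the paper leaves implicit.
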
 
\begin{proof}
The implication from iii) to i) is trivial. The implication from i) to ii) follows since $M\chi_C=0$ for every cycle $C$. For the implication from ii) to iii), let $f\in \Z^{E}$ satisfy the condition in ii). Let $T$ be a spanning tree in $G$ with root $r$, and define $x\in \Z^V$ by $x(v):=f^\transp\chi_{P_v}$, where $P_v$ is the path in $T$ from $r$ to $v$. Now for every edge $e=uv$ oriented from $u$ to $v$, we have $x(v)-x(u)=f(e)$. Hence, $f=M^\transp x$.
\end{proof}

\begin{lemma}\label{Laplacian}
The null space of $Q$ is spanned by the all-one vector $\one$.
\end{lemma}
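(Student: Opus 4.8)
The plan is to prove the two inclusions $\mathrm{span}(\one) \subseteq \ker Q$ and $\ker Q \subseteq \mathrm{span}(\one)$ separately, where the first is a quick computation and the second carries the real content. For the easy inclusion I would observe that every column of $M$ has exactly one entry equal to $+1$ (at the head) and one equal to $-1$ (at the tail), so $M^\transp \one = 0$ and therefore $Q\one = M M^\transp \one = 0$. Equivalently, each row of $Q$ sums to $d_G(v) - \sum_{u} |E(u,v)| = 0$.

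For the nontrivial inclusion, the key idea is to exploit the positive semidefiniteness of $Q = MM^\transp$ to reduce its null space to that of $M^\transp$. Concretely, for any $x \in \R^V$ we have
\[
x^\transp Q x = x^\transp M M^\transp x = \order{M^\transp x}^2,
\]
so $Qx = 0$ forces $\order{M^\transp x}^2 = 0$, i.e.\ $M^\transp x = 0$. This is the one step worth stating carefully, since a priori $\ker(MM^\transp)$ could be strictly larger than $\ker(M^\transp)$; the point is that over $\R$ the two coincide, and this is really where the argument turns.

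It then remains to read off $M^\transp x = 0$ edge by edge. For an edge $e = uv$ oriented from its tail $u$ to its head $v$, the corresponding coordinate of $M^\transp x$ equals $x(v) - x(u)$, so $M^\transp x = 0$ says precisely that $x$ takes the same value on the two ends of every edge. Since $G$ is connected, any two vertices are joined by a path, and walking along such a path shows that $x$ is constant on all of $V$; hence $x$ is a scalar multiple of $\one$. Combined with the easy inclusion this yields $\ker Q = \mathrm{span}(\one)$. I do not anticipate any serious obstacle: the only substantive point is the reduction via positive semidefiniteness, after which connectivity does the remaining work.
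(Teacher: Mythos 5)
Your proof is correct, but it takes a different route from the paper. You reduce $\ker Q$ to $\ker M^\transp$ via positive semidefiniteness — $x^\transp Qx = x^\transp MM^\transp x$ is the squared Euclidean norm of $M^\transp x$, so $Qx=0$ forces $M^\transp x=0$ — and then read off from the columns of $M$ that $x$ is constant on the ends of every edge, whence constant by connectivity. The paper instead argues by a discrete maximum principle: assuming $x\in\ker Q$ is not a multiple of $\one$, it picks a vertex $v$ maximizing $x(v)$ that has a neighbour $u$ with $x(u)<x(v)$ (such a $v$ exists by connectivity), and then the Laplacian identity $d_G(v)x(v)=\sum_{w}|E(v,w)|\,x(w)$ at $v$ yields a strict inequality and a contradiction. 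Your argument is the standard linear-algebra one and generalizes immediately to show $\ker(MM^\transp)=\ker(M^\transp)$ over $\R$ for any real matrix $M$; the paper's is purely combinatorial and avoids invoking the inner product, at the cost of being specific to the Laplacian. You are right to flag the semidefiniteness step as the one place where something could go wrong a priori: over a field without a positive-definite form the inclusion $\ker(M^\transp)\subseteq\ker(MM^\transp)$ can be strict, so the appeal to $\R$ is genuinely load-bearing. Both proofs use connectivity in the same essential way at the end.
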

\begin{proof}
Since the row sums of $Q$ equal zero, it is clear that $Q\one=0$. Conversely, let $x$ be in the null space of $Q$ and suppose, for contradiction, that $x$ is not a multiple of $\one$. Since $G$ is connected, we may choose $v\in V$ for which $x(v)$ is maximal and such that $v$ has a neighbour $u$ with $x(u) < x(v)$. From $Qx=0$ it follows that $d_G(v)x(v)=\sum_{w\in V\setminus\{v\}} |E(v,w)|\cdot x(w)$. On the other hand,
\begin{equation*}
\sum_{w\in V\setminus\{v\}}|E(v,w)|\cdot x(w)\ < \sum_{w\in V\setminus\{v\}}|E(v,w)|\cdot x(v)= d_G(v)x(v)
\end{equation*}
by our choice of $v$. This is a contradiction.  
\end{proof}

\subsection{Divisors}
We will largely adopt notation from \cite{BakerNorine2007}. Let $G=(V,E)$ be a connected graph. A vector $D\in \Z^V$ is called a \emph{divisor} on $G$. The set $\Div(G):=\Z^V$ denotes the set of all divisors on $G$. For a divisor $D\in \Div(G)$ we call $\deg(D):=\sum_{v\in V}D(v)$ the \emph{degree} of $D$. A divisor $D$ is said to be \emph{effective} if it is nonnegative. We denote by $\Div_+(G)$ the set of effective divisors on $G$ and by $\Div_+^k(G)$ the set of effective divisors of degree $k$. We denote by $\supp(D):=\{v\in V\mid D(v)\neq 0\}$ the \emph{support} of $D$.

We call two divisors $D$ and $D'$ \emph{equivalent} and write $D\sim D'$ if there is an integer vector $x\in \Z^V$ such that $D-D'=Q(G)x$. Clearly, this is indeed an equivalence relation. Observe that equivalent divisors have equal rank as $Q(G)$ has column sums equal to zero. 

We will often consider the situation where $x$ is the incidence vector of a subset $U$ of $V$, that is $D'=D-Q(G)\one_U$. Observe that is this case 
\begin{equation}
D'(v)=\begin{cases}D(v)-|E(\{v\},V\setminus U)|&\text{if $v\in U$},\\D(v)+|E(\{v\},U)|&\text{if $v\in V\setminus U$.}\end{cases}
\end{equation}
In particular, $D'(v)\leq D(v)$ if $v\in U$ and $D'(v)\geq D(v)$ if $v\in V\setminus U$. In terms of chip firing, we move one chip along each edge in the cut $E(U,V\setminus U)$. The following lemma shows that for equivalent effective divisors $D$ and $D'$, we can obtain $D'$ from $D$ by a sequence of steps of this form and each intermediate divisor being effective.

\begin{lemma}\label{chain}
Let $D_0$ and $D$ be equivalent effective divisors satisfying $D \neq D_0$. There is a chain of sets $\emptyset\subsetneq U_1\subseteq U_2\subseteq \cdots\subseteq U_k\subsetneq V$ such that $D_t:=D-Q(G)(\sum_{i=1}^t \one_{U_i})$ is effective for every $t=1,\ldots,k$ and $D_k=D$. Moreover, this chain is unique.
\end{lemma}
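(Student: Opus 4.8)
The plan is to realize the desired chain as the ``layer--cake'' decomposition of a suitably normalised firing vector, and to deduce effectivity of every intermediate divisor from a single monotonicity property of effective firings. The whole statement then reduces to one short lemma about $Q(G)$ plus bookkeeping.

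First I would produce the firing vector. Since $D\sim D_0$, by definition there is $x_0\in\Z^V$ with $D-D_0=Q(G)x_0$. Because $Q(G)\one=0$, I may replace $x_0$ by $x:=x_0-(\min_v x_0(v))\one$, so that $x\ge 0$, $\min_v x(v)=0$, and still $Q(G)x=D-D_0$. As $D\neq D_0$ we have $x\neq 0$; put $k:=\max_v x(v)\ge 1$ and define
\[
U_i:=\{v\in V: x(v)\ge k-i+1\}\qquad(i=1,\dots,k).
\]
These form a chain $U_1\subseteq\cdots\subseteq U_k$ with $\sum_{i=1}^k\one_{U_i}=x$; moreover $U_1=\{v:x(v)=k\}\neq\emptyset$ and $U_k=\supp(x)\subsetneq V$ since $\min_v x(v)=0$. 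Thus the chain has the required shape, and only effectivity and uniqueness remain.

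The crux, which I expect to be the main (though brief) obstacle, is the following monotonicity lemma: if $y,z\in\Z^V$ satisfy $D-Q(G)y\ge 0$ and $D-Q(G)z\ge 0$, then $D-Q(G)\max(y,z)\ge 0$, with $\max$ taken coordinatewise. To prove it, fix $v$ and assume without loss of generality $\max(y,z)(v)=y(v)$; since every off-diagonal entry of $Q(G)$ equals $-|E(u,v)|\le 0$ and $\max(y,z)\ge y$, one gets $(Q(G)\max(y,z))(v)\le (Q(G)y)(v)$, hence $(D-Q(G)\max(y,z))(v)\ge (D-Q(G)y)(v)\ge 0$. Granting this, I would rewrite the partial sums: a direct count gives $\sum_{i=1}^t\one_{U_i}=\max\!\big(x,(k-t)\one\big)-(k-t)\one$, so that, using $Q(G)\one=0$,
\[
D_t=D-Q(G)\max\!\big(x,(k-t)\one\big).
\]
Applying the lemma with $y=x$ (so $D-Q(G)x=D_0\ge 0$) and $z=(k-t)\one$ (so $D-Q(G)z=D\ge 0$) shows $D_t\ge 0$ for every $t$, and at $t=k$ we recover the other endpoint, $D_k=D-Q(G)x=D_0$.

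For uniqueness, suppose $\emptyset\subsetneq U_1'\subseteq\cdots\subseteq U_{k'}'\subsetneq V$ is any chain with the stated properties and set $y:=\sum_i\one_{U_i'}$. Then $y\ge 0$, $Q(G)y=D-D_0=Q(G)x$, and $y$ vanishes at any vertex outside $U_{k'}'$, so $\min_v y(v)=0$. By Lemma~\ref{Laplacian}, $y-x$ is a multiple of $\one$, and comparing minima forces $y=x$. Finally, in an increasing chain the indices $i$ with $v\in U_i'$ form an up-set, so $x(v)=|\{i:v\in U_i'\}|$ determines the threshold for each $v$; this pins down $k'=\max_v x(v)=k$ and $U_i'=U_i$, establishing that the chain is unique.
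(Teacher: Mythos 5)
Your proof is correct and follows essentially the same route as the paper's: both normalise the firing vector to a nonnegative $x$ with $\min_v x(v)=0$, take the layer sets $U_i=\{v: x(v)\ge k-i+1\}$ so that $\sum_i\one_{U_i}=x$, and derive uniqueness from Lemma~\ref{Laplacian} together with the observation that an increasing chain is determined by $x(v)=|\{i: v\in U_i\}|$. The one point where you diverge is the effectivity of the intermediate divisors: you prove a small stability lemma (if $D-Q(G)y\ge 0$ and $D-Q(G)z\ge 0$ then $D-Q(G)\max(y,z)\ge 0$, using that the off-diagonal entries of $Q(G)$ are nonpositive) and apply it to $y=x$, $z=(k-t)\one$ via the identity $\sum_{i=1}^{t}\one_{U_i}=\max(x,(k-t)\one)-(k-t)\one$; the paper instead argues directly that $D_t(v)$ is monotone along the chain on either side of the index where $v$ enters the $U_i$'s. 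Your lemma is the standard lattice/least-action property of effective firing vectors and is slightly more general than what is needed, but both arguments rest on the same sign observation and are equally valid. Note also that you run the chain from $D$ to $D_0$ (ending with $D_k=D_0$) while the paper runs it from $D_0$ to $D$; the statement's displayed definition of $D_t$ is not literally consistent with $D_k=D$, and since $D$ and $D_0$ play symmetric roles either reading yields the intended lemma.
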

\begin{proof}
Since $D_0\sim D$, there exists an $x\in \Z^V$ such that $D_0-Q(G)x=D$. By Lemma \ref{Laplacian}, $x$ is unique up to integral multiples of $\one$. Hence, there is a unique such $x$ with the additional property that $x\geq 0$ and $\supp(x)\neq V$. Let $k:=\max\{x(v)\mid v\in V\}$ and define $U_i:=\{v\in V\mid x(v)\geq k-i+1\}$ for $i=1,\ldots, k$. It follows that $\sum_{i=1}^k \one_{U_i}=x$.

Now consider any $v\in V$ and any $t\in \{1,\ldots, k\}$. If $v\not\in U_t$, then $D_0(v)\leq D_1(v)\leq \cdots\leq D_t(v)$, hence $D_t(v)\geq 0$. If $v\in U_t$, then $D_t(v)\geq D_{t+1}(v)\geq \cdots \geq D_k(v)$, hence $D_t(v)\geq 0$. It follows that $D_1,\ldots,D_{k-1}$ are effective.

Uniqueness follows directly from the uniqueness of an $x\in \Z^V$ for which $x\geq 0$, $\supp(x)\neq V$ and $D_0-Q(G)x=D$, in combination with the uniqueness of the decomposition $x=\sum_{i=1}^k\one_{U_i}$ as a sum of characteristic vectors of a chain $\emptyset\subsetneq U_1\subseteq U_2\subseteq \cdots\subseteq U_k\subsetneq V$.
\end{proof}

The \emph{rank} of a divisor $D$ is defined as
\begin{equation}
\rank(D):=\max\{k\mid \text{$D-D'$ is equivalent to an effective divisor for every $D'\in\Div_+^k$}\}.
\end{equation}

\noindent Observe that equivalent divisors have equal rank and that $\rank(D)\leq \deg(D)$. Also observe that the restriction of $D'$ to effective divisors in the definition is immaterial. 

Following Baker \cite{Baker2008}, we define the \emph{gonality} of $G$ by
\begin{equation}
\dgon(G):=\min\{k\mid \text{there is a divisor of degree $k$ on $G$ with positive rank}\}.
\end{equation} 


An effective divisor $D$ is called \emph{$v$-reduced} if for any nonempty subset $U\subseteq V\setminus \{v\}$ the divisor $D-Q(G)\one_U$ is not effective. In other words, for every nonempty $U\subseteq V\setminus \{v\}$ there is a $u\in U$ with $D(u)<|E(\{u\}, V\setminus U)|$. 

\begin{lemma}\label{reduction}
Let $v\in V$ and let $D$ be an effective divisor on $G$. Then there is a unique $v$-reduced divisor equivalent to $D$.
\end{lemma}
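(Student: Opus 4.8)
The plan is to prove existence and uniqueness separately, both exploiting the partial order structure that reduced divisors impose. For existence, I would start from any effective divisor $D$ equivalent to the given one and repeatedly apply ``legal'' set-firings that keep the divisor effective while pushing it toward $v$-reducedness. The natural way to make this terminate is to attach a monovariant. First I would establish the key fact that if $D$ is effective and $U\subseteq V\setminus\{v\}$ is nonempty with $D-Q(G)\one_U$ effective, then we may fire $U$ and still have an effective divisor; the issue is that naively iterating this need not terminate, since one can fire $U$ and then later ``unfire'' it via the complement. To force termination I would fix the vertex $v$ as a sink and use Dhar's burning algorithm, or equivalently order configurations lexicographically by distance from $v$: among all effective divisors equivalent to $D$, choose one maximizing the amount of ``debt'' concentrated near $v$, or minimizing a suitable potential $\sum_{u} D(u)\cdot\mathrm{dist}(u,v)$. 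A divisor minimizing such a potential over its equivalence class among effective divisors must be $v$-reduced, because any firing of a set $U\subseteq V\setminus\{v\}$ that keeps effectivity strictly decreases the potential (chips move strictly closer to $v$), contradicting minimality.

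For the termination/finiteness detail I would invoke Lemma~\ref{chain}: any two equivalent effective divisors are connected by a canonical chain of set-firings, so the equivalence class of $D$ among effective divisors of fixed degree is a finite set (degree and effectivity bound the number of such divisors). Hence a minimizer of the integer-valued potential exists, and the argument above shows it is $v$-reduced. This cleanly yields existence without having to argue convergence of an iterative procedure by hand.

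For uniqueness, suppose $D$ and $D'$ are both $v$-reduced and equivalent. By Lemma~\ref{chain} applied with $D_0:=D$ and target $D'$ (assuming $D\neq D'$), there is a nonempty chain $\emptyset\subsetneq U_1\subseteq\cdots\subseteq U_k\subsetneq V$ of sets whose firing transforms $D$ into $D'$ through effective intermediates. The innermost set $U_1$ is fired first, producing the effective divisor $D-Q(G)\one_{U_1}$. If $v\notin U_1$, then $U_1$ is a nonempty subset of $V\setminus\{v\}$ whose firing keeps $D$ effective, directly contradicting that $D$ is $v$-reduced. So I must rule out $v\in U_1$. Here I would run the chain in reverse: by the symmetry noted in the excerpt, reversing the sequence expresses $D$ in terms of $D'$ via firings of the complementary sets $V\setminus U_i$, and the first such firing is of $V\setminus U_k$. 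Since $U_k\subsetneq V$ this complement is nonempty, and one checks (using that $v$ lies in every $U_i$ or in none, by tracking where chips accumulate) that this gives a nonempty subset of $V\setminus\{v\}$ contradicting $v$-reducedness of $D'$.

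The main obstacle I anticipate is precisely the bookkeeping around the sink vertex $v$ in the uniqueness argument: I must argue that in the Lemma~\ref{chain} chain connecting two $v$-reduced divisors, either the smallest set $U_1$ or the complement $V\setminus U_k$ of the largest set furnishes a forbidden legal firing avoiding $v$. The clean way to guarantee this is to show that $v$ cannot belong to $U_1$ while simultaneously $V\setminus U_k$ meets $\{v\}$ --- i.e. to pin down whether $v$ participates in the firing at all --- which is exactly where Dhar's burning characterization of $v$-reduced divisors would make the argument transparent. If I wanted to avoid case analysis entirely, I would prove both existence and uniqueness at once via the burning algorithm: $v$-reducedness is equivalent to the statement that starting a fire at $v$ burns the whole graph, and this characterization is manifestly canonical within an equivalence class, giving uniqueness immediately.
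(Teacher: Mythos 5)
Your uniqueness argument is essentially the paper's and is sound: taking the chain $\emptyset\subsetneq U_1\subseteq\cdots\subseteq U_k\subsetneq V$ from Lemma~\ref{chain}, either $v\notin U_1$ and $D$ is not $v$-reduced, or $v\in U_1\subseteq U_k$, so that $V\setminus U_k$ is a nonempty subset of $V\setminus\{v\}$ whose firing from $D'$ yields the effective divisor $D_{k-1}$, whence $D'$ is not $v$-reduced. The ``bookkeeping'' you anticipate as an obstacle is therefore trivial: you do not need to know that $v$ lies in every $U_i$ or in none (which is false in general), only that the sets are nested.

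The existence half, however, has a genuine gap. The monovariant claim --- that firing any nonempty $U\subseteq V\setminus\{v\}$ which preserves effectivity strictly decreases $\sum_u D(u)\,\mathrm{dist}(u,v)$ because ``chips move strictly closer to $v$'' --- is false. Firing $U$ moves one chip along \emph{every} edge of the cut $E(U,V\setminus U)$, and a chip crossing an edge $xy$ with $x\in U$, $y\notin U$ may land \emph{farther} from $v$. On the path $v$--$a$--$b$ with $D=2\cdot\one_{\{a\}}$, firing $U=\{a\}$ is legal, sends one chip to $v$ and one to $b$, and leaves your potential unchanged; on the star with centre $a$ and leaves $v,b_1,b_2$ with $D=3\cdot\one_{\{a\}}$, firing $U=\{a\}$ strictly \emph{increases} it. So minimality of the potential does not contradict the existence of a legal firing avoiding $v$, and the minimizer is not shown to be $v$-reduced. (The closing appeal to Dhar's burning algorithm is a plan rather than a proof; the burning criterion characterizes $v$-reducedness but does not by itself produce a reduced representative.) The paper sidesteps this by measuring progress on the \emph{firing vector} rather than on the divisor: writing each effective $D'\sim D$ as $D'=D-Q(G)x_{D'}$ with $x_{D'}$ normalized so that $x_{D'}(v)=0$, it maximizes $\sum_u x_{D'}(u)$ over the finite set of effective divisors in the class. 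A legal firing of $U\subseteq V\setminus\{v\}$ replaces $x_{D'}$ by $x_{D'}+\one_U$ and hence strictly increases this quantity by $|U|\geq 1$ --- exactly the strict monotonicity your distance potential lacks. Substituting this potential for yours repairs the existence argument; as written, it does not go through.
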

\begin{proof}
For any divisor $D' \sim D$, there is a unique $x_{D'}\in \{x\in \Z^V\mid x\geq 0,\ x(v)=0\}$ such that $D'=D-Q(G)x_{D'}$ by Lemma \ref{Laplacian}. Let 
\begin{equation}
S:=\{x_{D'}\mid D'\text{ is effective and equivalent to $D$}\}.
\end{equation}
The set $S$ is finite since the number of effective divisors equivalent to $D$ is finite. Choose $x_{D'}\in S$ maximizing $\sum_{u\in V} x_{D'}(u)$. Then $D'$ is $v$-reduced because for any nonempty $U\subseteq V\setminus \{v\}$, the vector $x_{D'}+\one_U$ is not in $S$ by the choice of $x_{D'}$. 

To show uniqueness, let $D$ and $D'$ be two different, but equivalent effective divisors. It suffices to show that $D$ and $D'$ are not both $v$-reduced. By Lemma \ref{chain} there are sets $\emptyset\subsetneq U_1\subseteq\cdots\subseteq U_k\subsetneq V$ such that $D-Q(G)\one_{U_1}$ and $D'+Q(G)\one_{U_k}=D'-Q(G)\one_{V\setminus U_k}$ are effective. If $v\not\in U_1$, then $D$ is not $v$-reduced. If $v\in U_1$, then $v\not\in V\setminus U_k\subseteq V\setminus U_1$ and hence $D'$ is not $v$-reduced.
\end{proof}
\noindent Observe that if $D$ is $v$-reduced and $\rank(D)\geq 1$, then we have $D(v)\geq 1$.

We say that a divisor $D$ \emph{covers} $v\in V$ if there is an effective divisor $D'$ equivalent to $D$ with $v\in \supp(D')$. A nonempty subset $S\subseteq V$ is called a \emph{strong separator} if for each component $C$ of $G[V\setminus S]$ we have that $C$ is a tree and $|E(\{s\},V(C))|\leq 1$ for every $s\in S$. The folowing lemma is similar to a theorem of Luo \cite{Luo2011} on rank determining sets in the context of metric graphs.

\begin{lemma}\label{Luotype}
Let $S$ be a strong separator of $G$ and let $D$ be a divisor covering every $s\in S$. Then $D$ has positive rank. 
\end{lemma}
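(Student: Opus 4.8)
The plan is first to translate the conclusion into a statement about covering. By definition, $\rank(D)\ge 1$ means that $D-D'$ is equivalent to an effective divisor for every $D'\in\Div_+^1(G)$; since the effective divisors of degree $1$ are exactly the indicators $\one_v$, and $D-\one_v$ is equivalent to an effective divisor precisely when $D$ covers $v$, we see that $\rank(D)\ge 1$ is equivalent to the assertion that $D$ covers \emph{every} vertex of $G$. The vertices of $S$ are covered by hypothesis, so it suffices to fix a component $C$ of $G[V\setminus S]$ (a tree) and an arbitrary $v\in C$, and to show that $D$ covers $v$.

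For this I would pass to the unique $v$-reduced divisor $\tilde{D}\sim D$ provided by Lemma~\ref{reduction}, and reduce the problem to showing $\tilde{D}(v)\ge 1$. Indeed, if $\tilde{D}(v)\ge1$ then $\tilde{D}$ itself witnesses covering. Conversely, if $E\sim D$ is effective with $E(v)\ge 1$, then reducing $E$ to $\tilde{D}$ via Lemma~\ref{chain} uses a chain of sets $U_1\subseteq\cdots\subseteq U_k$ none of which contains $v$: the canonical firing vector taking $E$ to the $v$-reduced divisor is the nonnegative vector $x$ with $x(v)=0$, so $v$ is a minimal coordinate and $v\notin U_t$ for every $t$. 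Since $v$ is never fired, its value cannot decrease along the chain, whence $\tilde{D}(v)\ge E(v)\ge 1$. Thus everything reduces to proving $\tilde{D}(v)\ge 1$.

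The heart of the matter is to assume $\tilde{D}(v)=0$ and to contradict the coverage of $S$ using Dhar's burning algorithm together with the defining property of a strong separator. Recall that $\tilde{D}$ is $v$-reduced exactly when the fire lit at $v$ eventually ignites all of $V$, a vertex $u\ne v$ catching fire once the number of already-burnt edges at $u$ exceeds $\tilde{D}(u)$. Here the hypothesis that each $s\in S$ has at most one edge to $C$ becomes decisive. Because the components of $G[V\setminus S]$ are pairwise non-adjacent, fire started in $C$ can leave $C$ only by crossing into $S$, and it can enter any fixed separator vertex along only one edge; in particular the first vertex of $S$ to ignite does so across a single burnt edge, forcing its $\tilde{D}$-value to be $0$. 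More structurally, the single-edge condition forbids the fire from ``tunnelling'' through a tree component, so that the burnt region inside $C$ is always a subtree $F\subseteq C$ grown from $v$, whose boundary edges to $S$ remain unburnt until the fire first escapes $C$.

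The remaining and most delicate step is to convert this rigidity into a genuine failure of coverage. The subtlety to guard against is that the first separator vertex to ignite need not itself be uncovered, since a different equivalent divisor may place a chip there; so one cannot simply read off an uncovered vertex from the first wave of burning. Instead I would argue extremally on the burnt subtree $F$: on $F$ the divisor $\tilde{D}$ realizes the maximal ``push toward $v$'' that still leaves $v$ empty while never using the (unburnt) edges from $F$ to $S$, and the tree count $\sum_{u\in F}\deg_{C[F]}(u)=2(|F|-1)$ then bounds the chips $\tilde{D}$ can carry on $F$. Comparing this with what covering the separator vertices meeting $F$ would demand—again exploiting that each such vertex touches $F$ in exactly one edge—should produce a vertex $s\in S$ whose own $s$-reduced divisor vanishes at $s$, i.e.\ an $s$ that $D$ does \emph{not} cover, contradicting the hypothesis. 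I expect precisely this last translation, from the local burning data on $F$ to a globally uncovered separator vertex, to be the main obstacle; the steps preceding it are bookkeeping with Lemmas~\ref{chain} and~\ref{reduction}.
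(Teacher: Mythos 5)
Your opening reductions are sound: positive rank is indeed equivalent to covering every vertex, and a vertex $v$ is covered if and only if the $v$-reduced representative $\tilde D$ satisfies $\tilde D(v)\geq 1$ (your claim that the canonical firing vector $x$ from an effective $E\sim D$ to $\tilde D$ has $x(v)=0$ needs one extra line --- if $x(v)>0$ then unfiring $V\setminus\supp(x)$ would contradict $v$-reducedness of $\tilde D$ --- but it is correct). The genuine gap is in your last two paragraphs: you never actually derive the contradiction. You identify the obstruction yourself --- the first separator vertex ignited by the burning process started at $v$ has $\tilde D$-value $0$, but that does not make it uncovered, since coverage of $s$ is witnessed by the $s$-reduced divisor, not by the $v$-reduced one --- and the proposed repair (bounding the chips $\tilde D$ carries on the burnt subtree $F$ by the edge count $2(|F|-1)$ and ``comparing with what covering the separator vertices would demand'') is not yet an argument: nothing in it converts a chip count on $F$ into the vanishing of some $s$-reduced divisor at $s$. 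As written, the proof stops exactly where the work begins.

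For comparison, the paper closes this gap without Dhar's algorithm by changing the quantification. Enlarge $S$ to the set of \emph{all} covered vertices (any superset of a strong separator is again one), suppose some component $C$ of $G[V\setminus S]$ survives, and let $S'$ be the set of separator vertices with exactly one edge into $C$; pick $s\in S'$ and replace $D$ by its $s$-reduced representative. Either $S'\subseteq\supp(D)$, in which case $D+Q(G)\one_{V(C)}$ is effective (here the single-edge condition is used) and pushes a chip onto a vertex of $C$; or some $t\in S'$ has $D(t)=0$, so $D$ is $s$-reduced but not $t$-reduced, the witnessing firing set $U$ satisfies $s\in U$, $t\notin U$, and the $s$--$t$ path through $C$ crosses the cut at an edge $uv$ with $D(u)\geq 1$ and $(D-Q(G)\one_U)(v)\geq 1$. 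In both cases a vertex of $C$ is covered, contradicting $V(C)\cap S=\emptyset$. The key move you are missing is this pairing of one $s$-reduced and one non-$t$-reduced separator vertex on the same component, which manufactures a covered vertex inside $C$ directly instead of trying to exhibit an uncovered vertex of $S$.
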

\begin{proof}
Since any superset of a strong separator is again a strong separator, we may assume that $S=\{s\in V\mid \text{ $s$ is covered by $D$}\}$. We have to show that $S=V$. 

Suppose not. Let $C$ be a component of $G[V\setminus S]$ and let $S':=\{s\in S: |E(\{s\},V(C))|=1\}$. Since $G$ is connected, $S'$ is not empty, so we may take $s\in S'$ and assume that $D$ is $s$-reduced. If $S'\subseteq \supp(D)$, then $D+Q(G)\one_{V(C)}$ is effective and has support on at least one vertex in $V(C)\subseteq V\setminus S$, a contradiction. 

Hence, we may assume that there is a $t\in S'\setminus \supp(D)$. In particular, $D$ is not $t$-reduced. Let $a$ and $b$ be the unique neighbours of $s$ and $t$ in $V(C)$, respectively, and let $P=(s,a,\ldots,b,t)$ be the path from $s$ to $t$ with its interior points in $V(C)$. Since $D$ is $s$-reduced, but not $t$-reduced, there is a set $U\subseteq V$ with $s\in U$, $t\not\in U$ such that $D':=D-Q(G)\one_U$ is effective. The cut $E(U,V\setminus U)$ must intersect some edge $e=uv$ of the path $P$, and we find that $D(u)\geq 1$ and $D'(v)\geq 1$. Since at least one of $u$ and $v$ is in $V(C)\subseteq V\setminus S$, we obtain a contradiction.
\end{proof}

\begin{corollary}\label{rankdetermining}
If $H$ is a subdivision of $G$ and $D$ is a divisor on $H$ that covers all $v\in V(G)$, then $D$ has positive rank.
\end{corollary}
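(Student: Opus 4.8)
The plan is to deduce this directly from Lemma~\ref{Luotype} by showing that the vertex set $V(G)$, viewed as a subset of $V(H)$ via the subdivision, is a strong separator of $H$.

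First I would recall the structure of a subdivision: $H$ arises from $G$ by replacing each edge $e=uv$ of $G$ by a path $u,w_1^e,\ldots,w_{m_e}^e,v$ whose interior vertices $w_1^e,\ldots,w_{m_e}^e$ (possibly none, if $e$ is not subdivided) are new degree-$2$ vertices, and these interior vertices are pairwise distinct across different edges. Consequently $V(H)\setminus V(G)$ consists exactly of these interior vertices, and two of them are adjacent in $H$ if and only if they are consecutive on the path replacing the same edge $e$.

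Next I would determine the components of $H[V(H)\setminus V(G)]$. By the adjacency description above, each component is precisely the set of interior vertices of a single subdivided edge, and these form a path. In particular every component $C$ is a tree, verifying the first condition in the definition of a strong separator. For the degree condition, I would observe that an interior vertex adjacent to a vertex of $G$ can only be an endpoint $w_1^e$ or $w_{m_e}^e$ of the path replacing some edge $e=uv$, and that $w_1^e$ is joined to $u$ by a single edge while $w_{m_e}^e$ is joined to $v$ by a single edge (here $u\neq v$ since $G$ is loopless). Hence for each $s\in V(G)$ and each component $C$ we have $|E(\{s\},V(C))|\leq 1$, so $V(G)$ is a strong separator of $H$.

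Finally, since $D$ covers every vertex in $S:=V(G)$ by hypothesis, Lemma~\ref{Luotype} applied to the strong separator $S$ immediately yields that $D$ has positive rank. I do not anticipate a genuine obstacle here; the only care needed is in the bookkeeping of degenerate cases: edges of $G$ left unsubdivided (contributing no component), a subdivided edge with a single interior vertex (whose one vertex is joined to both endpoints, but still by one edge each), and parallel edges of $G$ (each subdivided independently, giving separate path components).
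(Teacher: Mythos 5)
Your proof is correct and is exactly the argument the paper intends (the paper states the corollary without proof, as an immediate consequence of Lemma~\ref{Luotype}): you verify that $V(G)$ is a strong separator of $H$, since the components of $H[V(H)\setminus V(G)]$ are the interior paths of subdivided edges and each original vertex meets each such path in at most one edge. The degenerate cases you flag (unsubdivided edges, single interior vertices, parallel edges) are handled properly.
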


\subsection{Treewidth}
The notion of \emph{treewidth} was first introduced by Halin \cite{Halin1976} and later rediscovered by Robertson and Seymour \cite{RobertsonSeymour1990} as part of their graph minor theory. There are several equivalent definitions of treewidth. The most natural one is perhaps in terms of tree-decompositions of a graph. However, for reasons of brevity and since we will not need tree-decompositions here, we use the following definition in terms of chordal extensions. 

A graph $H$ is called \emph{chordal} if it has no induced cycle of length at least 4. If $G=(V,E)$ is a subgraph of a chordal graph $H=(V,F)$, then $H$ is called a \emph{chordal extension} of $G$. We denote the maximum size of a clique in a graph $H$ by $\omega(H)$. The treewidth $\tw(G)$ of a graph $G$ can now be defined by 
\begin{equation}
\tw(G):=-1+\min\{\omega(H)\mid \text{ $H$ is a chordal extension of $G$}\}.
\end{equation}
\noindent Observe that the treewidth of a (nontrivial) tree equals 1 and the treewidth of a complete graph on $n$ nodes equals $n-1$ as these graphs are chordal and have clique number $2$ and $n$, respectively. 

It is NP-complete to determine for a given graph $G$ and a given integer $k$ whether $\tw(G)\leq k$ (see  \cite{ArnborgEtal1987}). The fact that this problem is in NP follows directly from the definition by using a suitable chordal extension $H$ as a certificate. Indeed, a perfect elimination order for $H$ certifies chordality of $H$ and provides $\omega(H)$.

In order to use treewidth as a lower bound, we will need a way to lower bound treewidth. For this, we will utilize the notion of bramble. Let $G=(V,E)$ be a graph, and let $2^V$ denote the power set of $V$. A set $\B\subseteq 2^V\setminus\{\emptyset\}$ is called a \emph{bramble} if for any $B,B'\in \B$ the induced subgraph $G[B\cup B']$ is connected. In particular, $G[B]$ is connected for every $B\in \B$. For any $B,B'\in \B$, either $B\cap B'\neq \emptyset$, or $B\cap B'=\emptyset$ and there is an edge in $E(B,B')$. In the latter case, we say that $B$ and $B'$ \emph{touch}.
A set $S\subseteq V$ is called a \emph{hitting set} for $\B$ if it has nonempty intersection with every member of $\B$. The \emph{order} of $\B$, denoted $\order{\B}$, is the minimum size of a hitting set for $\B$. That is:
\begin{equation}
\order{\B}:=\min \{|S| : S\subseteq V, S\cap B\neq\emptyset \text{ for all $B\in \B$}\}.
\end{equation}

We will use the following characterization of treewidth due to Seymour and Thomas~\cite{SeymourThomas1993}.

\begin{theorem}[treewidth duality]
Let $k\geq 0$ be an integer. A graph $G$ has treewidth at least $k$ if and only if it has a bramble of order at least $k + 1$.
\end{theorem}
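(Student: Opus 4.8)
The statement is an equivalence, so I would treat the two implications separately after reformulating it as the single identity that $\tw(G)$ equals the maximum order of a bramble minus one. It is convenient to pass from the chordal-extension definition to tree-decompositions. Recall that a chordal graph $H$ is the intersection graph of subtrees of a tree, and its maximal cliques, placed at the nodes of that tree, form a tree-decomposition of width $\omega(H)-1$; conversely any tree-decomposition of width $w$ yields a chordal extension with clique number at most $w+1$ by turning each bag into a clique. Hence a chordal extension of $G$ with clique number $k+1$ corresponds to a tree-decomposition $(T,(X_t)_{t\in V(T)})$ of $G$ whose bags $X_t$ all have size at most $k+1$, and $\tw(G)$ is the least such $k$. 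I would run the whole argument in the language of tree-decompositions and translate back only at the end.

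For the direction that a bramble of order at least $k+1$ forces $\tw(G)\ge k$, I would show that \emph{every} bramble has order at most $\tw(G)+1$. Fix a tree-decomposition $(T,(X_t))$ of minimum width and a bramble $\B$. For $B\in\B$ put $T_B:=\{t\in V(T):X_t\cap B\neq\emptyset\}$; since $G[B]$ is connected, the standard property of tree-decompositions gives that $T_B$ induces a subtree of $T$. For $B,B'\in\B$, connectedness of $G[B\cup B']$ means $B$ and $B'$ touch, so either they meet or $G$ has an edge between them; using that every vertex and every edge is covered by some bag, in both cases there is a bag containing a vertex of $B$ and a vertex of $B'$, whence $T_B\cap T_{B'}\neq\emptyset$. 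The family $\{T_B\}_{B\in\B}$ thus pairwise intersects, and by the Helly property for subtrees of a tree there is a common node $t^\ast$. Then $X_{t^\ast}$ meets every $B\in\B$, so it is a hitting set and $\order{\B}\le|X_{t^\ast}|\le\tw(G)+1$.

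For the converse I would prove the contrapositive: if every bramble of $G$ has order at most $k$, then $G$ has a tree-decomposition of width at most $k-1$. I would argue by induction on $|V(G)|$, using the submodularity of the order function $S\mapsto|S|$ on vertex separations. The aim is to find a separation $(A,B)$ of order at most $k$ whose separator $A\cap B$ can serve as an adhesion set: one recurses on $G[A]$ and $G[B]$ (each again free of brambles of order greater than $k$, since such a bramble would be one in $G$), obtains tree-decompositions of width at most $k-1$, and glues them along a node carrying $A\cap B$. The crucial point is to guarantee that a usable separation exists: if every candidate separation failed, I would collect the resulting family of pairwise inseparable connected pieces, verify that they pairwise touch and that no set of at most $k$ vertices meets them all, and thereby produce a bramble of order greater than $k$, contradicting the hypothesis. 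Equivalently, one may first extract from $\tw(G)\ge k$ a haven of order $k+1$, namely a consistent choice of a component of $G-S$ for each $S$ with $|S|\le k$, and then convert the haven into a bramble of order at least $k+1$.

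I expect this second direction to be the main obstacle, and within it the step that manufactures a bramble from the failure of every separation to be usable. Making this precise requires an extremal choice of separation (minimizing order, and then, say, one side), a careful use of submodularity to replace two crossing cheap separations by a single cheaper one, and a verification that the pieces one collects genuinely form a bramble rather than merely a touching family. Once the recursion and its gluing are in place, checking the tree-decomposition axioms for the assembled object and re-translating it into a chordal extension with clique number at most $k$ is routine.
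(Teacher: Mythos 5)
The paper does not prove this statement: it is imported as a black box from Seymour and Thomas \cite{SeymourThomas1993}, so there is no in-paper argument to compare yours against. Judged on its own terms, your proposal splits into a complete half and an incomplete half. The direction ``a bramble of order at least $k+1$ forces $\tw(G)\geq k$'' is correct and essentially fully argued: the translation between chordal extensions and tree-decompositions is standard, each $T_B$ induces a subtree of $T$ because $G[B]$ is connected, touching gives pairwise intersection of these subtrees (a bag containing a common vertex, or a bag containing both ends of a connecting edge), and the Helly property for subtrees of a tree yields a single bag meeting every member of $\B$, so $\order{\B}\leq \tw(G)+1$. No objection there.

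The converse is where the theorem actually lives, and your treatment of it is a plan rather than a proof, as you yourself flag. The step ``if every candidate separation fails, collect the resulting pieces and verify they form a bramble of order greater than $k$'' \emph{is} the content of the Seymour--Thomas theorem, and the naive recursion you describe does not go through as stated: after cutting along a separation $(A,B)$ of order at most $k$ and recursing on $G[A]$ and $G[B]$, nothing guarantees that the parts shrink in the relevant sense or that the width bound survives the gluing, since the separator must sit inside bags on both sides and the same obstruction can simply reappear inside a part. The known short proof (Bellenbaum--Diestel, refining Seymour--Thomas) circumvents this by proving a strengthened inductive statement about tree-decompositions that are \emph{admissible} for a given bramble $\B$ (every oversized bag fails to be a hitting set for $\B$), inducting over brambles rather than over $|V(G)|$, and using submodularity of the order of a separation to move a minimum hitting set of $\B$ into a bag; none of this machinery is present in your sketch. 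Your alternative route via havens has the same problem twice over: extracting a haven of order $k+1$ from $\tw(G)\geq k$ is the identical hard direction in different clothing, and converting a haven into a bramble is itself a lemma requiring proof (pairwise touching of the chosen components is not immediate). So: first direction complete, second direction a genuine gap located exactly where you predicted it would be.
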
 

\begin{remark}
Observe that the treewidth of a graph is equal to the treewidth of the underlying simple graph. It is well-known that treewidth is monotone under taking minors (see for example \cite{DiestelGraphTheory}). That is, removing edges or contracting edges can only decrease treewidth. This also follows easily from the definition. 

In particular, if $H$ is a subdivision of $G$, then $\tw(G)\leq \tw(H)$. It is not hard to see that if $G$ has treewidth at least 2, then in fact $\tw(G)=\tw(H)$ holds. Indeed, it suffices to consider the case that $H$ is obtained from $G$ by subdividing an edge $uv$. Let $G'$ be a chordal extension of $G$ with $\omega(G')=\tw(G)+1$. By adding to $G'$ a new node $w$ and edges $uw$ and $vw$, we obtain a chordal extension $H'$ of $H$. Clearly, $\omega(H')=\max(3,\omega(G'))$. Hence,  
\begin{equation}
\tw(H)\leq \omega(H')-1=\max(2,\omega(G')-1)=\max(2,\tw(G))=\tw(G).
\end{equation}  
If $\tw(G)=1$ and $G$ has two parallel edges, then subdividing such an edge yields a graph of treewidth 2.     
\end{remark}
We refer the interested reader to Chapter 12 in \cite{DiestelGraphTheory} for an excellent exposition of treewidth and its role in the graph minor theory.

\section{Proof of the main theorem}
In this section we prove our main theorem.
\begin{theorem}\label{main}
Let $G=(V,E)$ be a connected graph. Then $\dgon(G)\geq \tw(G)$.
\end{theorem}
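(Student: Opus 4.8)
The plan is to combine the treewidth duality theorem with the divisor theory assembled above. By that theorem the maximum order of a bramble of $G$ equals $\tw(G)+1$, so it suffices to prove the following \emph{key inequality}: for every bramble $\B$ of $G$ and every divisor $D$ of positive rank, $\order{\B}\le \deg(D)+1$. Granting this, choosing $\B$ of maximum order and $D$ of minimum degree among the positive rank divisors gives $\tw(G)+1=\order{\B}\le \dgon(G)+1$, which is exactly the claim. Thus the whole problem reduces to exhibiting, for a fixed bramble $\B$ and a fixed positive rank divisor $D$, a hitting set for $\B$ of size at most $\deg(D)+1$. That the bound genuinely carries a $+1$ is already visible on a single edge, where the gonality is $1$ but the bramble $\{\{u\},\{v\}\}$ has order $2$; no single effective divisor equivalent to $D$ has support meeting both members, so the hitting set must combine information from \emph{more than one} equivalent divisor.

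To build such a hitting set I would work with reduced divisors. Fix a vertex $q$ and replace $D$ by the equivalent, hence still positive rank, $q$-reduced divisor furnished by Lemma~\ref{reduction}; note $q\in\supp(D)$ (the remark after Lemma~\ref{reduction}) and $|\supp(D)|\le\deg(D)$ since $D$ is effective. If $\supp(D)$ already meets every member of $\B$ we are done, with a set of size at most $\deg(D)$. Otherwise some bramble elements are disjoint from $\supp(D)$, and positive rank guarantees that for each such $B$ an equivalent effective divisor places a chip on $B$. By Lemma~\ref{chain} the passage from $D$ to that divisor is realized by a \emph{nested} chain of firing sets $\emptyset\subsetneq U_1\subseteq\cdots\subseteq U_k\subsetneq V$; since $D$ is $q$-reduced, no nonempty subset of $V\setminus\{q\}$ fires from $D$, which forces $q\in U_1$, so all these sets lie inside a single chain emanating from $q$. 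The idea is to let the extra vertex of the hitting set be a vertex on the \emph{frontier} of this firing process, namely an endpoint in $B$ of the last edge across which a chip is pushed into $B$.

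The heart of the argument --- and the step I expect to be the main obstacle --- is to show that a \emph{single} extra vertex suffices to cover \emph{all} bramble elements missed by $\supp(D)$, so that the total hitting set has size at most $\deg(D)+1$ rather than growing with the number of missed elements. This is exactly where the defining property of a bramble must be exploited: because any two members $B,B'$ satisfy that $G[B\cup B']$ is connected, the missed elements cannot be scattered arbitrarily across the linearly ordered cuts $E(U_i,V\setminus U_i)$ produced above; their mutual touching forces them to interact with the chain in a coherent, essentially nested, fashion. I would make this precise by choosing the firing process \emph{extremally} with respect to $\B$ --- for instance, firing as far as possible while keeping a chip available to reach each still-unhit element --- and then arguing that the innermost level of the resulting chain, together with $\supp(D)$, already meets every member of $\B$. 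The connectivity of the unions $G[B\cup B']$ is precisely what rules out two disjoint ``escaping'' directions that would each demand their own new vertex.

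Once the key inequality $\order{\B}\le\deg(D)+1$ is established in this way, the theorem follows immediately from treewidth duality as explained in the first paragraph. I expect the bookkeeping in the extremal firing argument --- tracking which bramble elements remain unhit as the chain grows, and certifying that the frontier of the chain is a single crossing into the bramble --- to be the only genuinely delicate part; the remaining ingredients (reduced divisors via Lemma~\ref{reduction}, the nested chain of Lemma~\ref{chain}, and the bramble axioms) are exactly the tools the preliminaries have set up for this purpose.
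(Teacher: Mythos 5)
Your reduction to the inequality $\order{\B}\le\deg(D)+1$ via treewidth duality is exactly the right framing, and Lemma~\ref{chain} is indeed the right tool for organizing the firing process. But the construction you propose for the hitting set --- $\supp(D)$ for a $q$-reduced positive rank divisor $D$, augmented by a single ``frontier'' vertex --- cannot work, and the step you flag as the main obstacle is in fact false. The problem is that $|\supp(D)|$ can be much smaller than $\deg(D)$, so a set of size $|\supp(D)|+1$ can simply be too small to hit a bramble of order $\deg(D)+1$, no matter which extra vertex you add. Concretely, take $G=K_4$, the bramble $\B$ of all four singletons (order $4$), and the $v$-reduced divisor $D=3\cdot(v)$, which has positive rank since firing $\{v\}$ yields $\one_{V\setminus\{v\}}$. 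Here $\deg(D)=3$ and $\order{\B}=4=\deg(D)+1$, so any hitting set must have all four vertices, while $\supp(D)\cup\{w\}$ has only two. No choice of extremal firing or of the extra vertex repairs this; the hitting set must be built from something whose size is controlled by $\deg(D)$ rather than by $|\supp(D)|$.

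The paper's proof resolves exactly this difficulty by comparing the degree to a \emph{cut} instead of to the support. One chooses $D$ (among equivalent effective divisors) to maximize the number of bramble members met by $\supp(D)$; if that support is not already a hitting set, the chain of Lemma~\ref{chain} toward a $v$-reduced divisor (for $v$ in a missed member $B$) produces an index $i$ and a set $U_i$ with some $B'\in\B$ entirely inside $U_i$ and $B$ entirely inside $V\setminus U_i$ (this uses a small lemma saying that a connected $B$ whose intersection with the support vanishes in one firing step must lie wholly inside the fired set). A separate lemma then builds a hitting set of size $|E(U_i,V\setminus U_i)|+1$ from one endpoint of each cut edge plus one carefully chosen vertex --- this is where the bramble connectivity is really used --- and effectiveness of $D_i=D_{i-1}-Q(G)\one_{U_i}$ gives $\deg(D_{i-1})\ge\sum_{u\in U_i}D_{i-1}(u)\ge|E(U_i,V\setminus U_i)|\ge\order{\B}-1$. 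To complete your argument you would need to replace the ``support plus one vertex'' hitting set by such a cut-based one; as written, the central claim has a counterexample.
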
 

We start by stating and proving two lemmas.
\begin{lemma}\label{oneside}
Let $D,D'$ be effective divisors such that $D'=D-Q(G)\one_U$ for some subset $U\subseteq V$. Let $B\subseteq V$ be such that $G[B]$ is connected. Suppose that $B\cap \supp(D)$ is nonempty, but $B\cap \supp(D')$ is empty. Then $B\subseteq U$.
\end{lemma}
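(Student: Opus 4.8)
The plan is to exploit the explicit componentwise description of the firing move recorded just before the statement. For $D' = D - Q(G)\one_U$ we have $D'(v) \leq D(v)$ whenever $v \in U$, and
\[
D'(v) = D(v) + |E(\{v\}, U)| \geq D(v)
\]
whenever $v \in V \setminus U$. Since $D'$ is effective and $B \cap \supp(D') = \emptyset$, the hypothesis is equivalent to saying that $D'(v) = 0$ for every $v \in B$. I would combine this vanishing with the second inequality and the connectivity of $G[B]$ to force $B \subseteq U$, arguing by contradiction.

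First I would locate a single vertex of $B$ lying in $U$, to serve as a seed. By assumption there is some $b \in B$ with $D(b) \geq 1$. If $b$ were in $V \setminus U$, the second inequality would give $D'(b) \geq D(b) \geq 1 > 0$, contradicting $D'(b) = 0$; hence $b \in B \cap U$, and in particular $B \cap U \neq \emptyset$. Next I would propagate this membership across all of $B$. Suppose, for contradiction, that $B \not\subseteq U$, so that $B \setminus U \neq \emptyset$. Then the vertex set of the connected graph $G[B]$ is partitioned into the two nonempty sets $B \cap U$ and $B \setminus U$, so there is an edge $xy$ of $G[B]$ with $x \in U$ and $y \in B \setminus U$. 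For this $y \in V \setminus U$ the edge $xy$ lies in $E(\{y\}, U)$, so $|E(\{y\}, U)| \geq 1$ and the second inequality yields $D'(y) = D(y) + |E(\{y\}, U)| \geq 1 > 0$. This contradicts $D'(y) = 0$, and I conclude $B \subseteq U$.

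The step I would be most careful about is the passage from connectivity to the existence of a crossing edge $xy$ inside $G[B]$: one must ensure that $G[B]$ genuinely meets both $U$ and its complement before invoking connectivity. This is exactly what the seed vertex $b$ (placing a vertex of $B$ in $U$) and the contradiction hypothesis (placing a vertex of $B$ outside $U$) provide. Everything else is just reading off the behaviour of a single firing move, so I expect no serious obstacle beyond keeping this bookkeeping straight.
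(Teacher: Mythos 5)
Your proof is correct and follows essentially the same route as the paper: rule out $B\subseteq V\setminus U$ using that some vertex of $B$ carries a chip under $D$ and would still do so under $D'$, then use connectivity of $G[B]$ to find an edge crossing from $B\cap U$ to $B\setminus U$, whose endpoint outside $U$ would receive a chip. The only cosmetic difference is that you phrase the first step as locating a seed vertex of $B$ inside $U$ rather than as excluding the case $B\subseteq V\setminus U$ outright; the content is identical.
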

\begin{proof}
Clearly, $B$ cannot be a subset of $V\setminus U$, because otherwise $D'(v)\geq D(v)$ for every $v\in B$. Now suppose that $B\cap U$ and $B\setminus U$ are both nonempty. Since $G[B]$ is connected, there is an edge $uv$ with $u\in B\cap U$ and $v\in B\setminus U$. But then $D'(v)=(D-Q(G)\one_U)(v)\geq D(v)+1\geq 1$ since $u\in U$ is a neighbour of $v\in V\setminus U$. This is a contradiction as well, so we see that $B\subseteq U$ must hold.
\end{proof}

\begin{lemma}\label{brambleseparation}
Let $\B$ be a bramble in $G$ and let $U\subseteq V$. Suppose that there exist $B,B'\in \B$ such that $B\subseteq V\setminus U$ and $B'\subseteq U$. Then $|E(U,V\setminus U)|+1\geq \order{\B}$.
\end{lemma}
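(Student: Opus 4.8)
The plan is to use the boundary $E(U, V\setminus U)$ as a kind of cut and to show that the vertices incident to this cut, together with a few extra vertices if necessary, form a hitting set for $\B$. Since the order $\order{\B}$ is the minimum size of a hitting set, bounding the size of such a hitting set from above by $|E(U,V\setminus U)| + 1$ would immediately give the claim.

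First I would define $S$ to be a set of representative endpoints of the cut edges: for each edge $e \in E(U, V\setminus U)$, pick one of its two endpoints (say the endpoint in $U$). This gives a set $S \subseteq V$ with $|S| \leq |E(U, V\setminus U)|$. The key claim is that $S$ hits every member of $\B$ except possibly one. The intuition is that any $B \in \B$ with $B \subseteq U$ and any $B' \in \B$ with $B' \subseteq V\setminus U$ must touch, by the bramble property, so there is an edge crossing the cut between them — but a member $B''$ lying entirely on one side with no cut edge incident to it is the problematic case. Concretely, a bramble member $B''$ is \emph{not} hit by the cut-vertices only if $B''$ lies strictly inside $U$ or strictly inside $V\setminus U$ with no incident cut edge; since $G[B'']$ is connected and $B''$ must touch both $B \subseteq V\setminus U$ and $B' \subseteq U$, this forces a crossing edge incident to $B''$, so $B''$ does meet the set of cut-endpoints.

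The cleaner route, which I expect to carry the argument, is to observe that every $B'' \in \B$ is connected, so it lies entirely in $U$, entirely in $V\setminus U$, or meets the cut. If $B'' \subseteq U$, then since $B''$ touches $B \subseteq V\setminus U$ (they cannot intersect as they are on opposite sides, so they must touch) there is a cut edge with an endpoint in $B''$; symmetrically if $B'' \subseteq V\setminus U$, using $B' \subseteq U$. In all cases $B''$ contains an endpoint of some cut edge. Thus the set $W$ of all endpoints of cut edges is a hitting set, giving $\order{\B} \leq |W| \leq 2|E(U, V\setminus U)|$, which is too weak. To get the sharp bound $|E(U,V\setminus U)| + 1$, I would instead take $S$ to consist of exactly one endpoint per cut edge, chosen on the $U$-side, and argue that $S$ hits every $B''$ except possibly those contained in $V\setminus U$; since all such $V\setminus U$-members pairwise intersect or touch and all contain a $V\setminus U$-endpoint of a cut edge, adding a single vertex (for instance one endpoint in $V\setminus U$, or a single vertex of $B$) repairs the one missing member.

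The main obstacle is the bookkeeping to achieve the $+1$ rather than a factor of $2$: one must choose the representative endpoints asymmetrically and verify that at most one ``side'' of bramble members can be systematically missed, so that a single additional vertex suffices. I expect the correct clean statement to be: let $S$ be one endpoint per cut edge chosen in $U$; every $B'' \not\subseteq V\setminus U$ is hit by $S$ (as it either meets $U$ through a vertex incident to the cut or lies in $U$ and touches $B$), and every $B'' \subseteq V\setminus U$ contains a fixed vertex after adjoining one well-chosen vertex $w$. Thus $S \cup \{w\}$ is a hitting set of size at most $|E(U, V\setminus U)| + 1$, yielding $\order{\B} \leq |E(U,V\setminus U)| + 1$ as required.
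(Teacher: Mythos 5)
Your overall strategy (build a hitting set from one endpoint per cut edge, plus one extra vertex) matches the paper's, but the specific construction you propose does not work, and the step you flag as ``bookkeeping'' is in fact the whole difficulty. If you take $S$ to consist of the $U$-side endpoint of every cut edge, then $S \subseteq U$, so \emph{no} member of $\B$ contained in $V\setminus U$ is hit by $S$; all of them must be hit by the single added vertex $w$. But there can be several pairwise disjoint members of $\B$ contained in $V\setminus U$: pairwise touching does not give a common vertex (e.g.\ three connected sets arranged around a $6$-cycle in $G[V\setminus U]$, each incident to the cut, pairwise joined by edges but with empty pairwise intersections). No single $w$ --- and in particular no vertex of $B$, which such members need only \emph{touch}, not meet --- can hit all of them. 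So the claim ``adding a single vertex repairs the one missing member'' is false: there need not be only one missing member.

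The paper's proof resolves this with an adaptive choice of endpoints rather than a uniform one. Writing $X$ and $Y$ for the $U$-side and $(V\setminus U)$-side shores of the cut, it picks $B'\in\B$ with $B'\subseteq U$ and $B'\cap X$ \emph{inclusionwise minimal}, fixes one vertex $s\in B'\cap X$, and then for each cut edge $xy$ puts $x$ into the hitting set if $x\notin B'$ and puts $y$ into it if $x\in B'$. Members contained in $V\setminus U$ are then hit because they touch $B'$, and the crossing edge witnessing this has its $X$-endpoint in $B'$, so its $Y$-endpoint was selected. Members contained in $U$ are hit either by $s$ (if they contain all of $B'\cap X$) or by some selected $x\in X\setminus B'$ that they contain; it is exactly the minimality of $B'\cap X$ that guarantees one of these two alternatives. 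Members meeting both shores contain a full cut edge and are hit trivially. Your proposal is missing both the minimal choice of $B'$ and the edge-by-edge switching of sides, and without them the $+1$ bound cannot be reached by your route.
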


\begin{proof}
We will construct a hitting set for $\B$ of size at most $|E(U,V\setminus U)|+1$.
Let $F:=E(U,V\setminus U)$ be the cut determined by $U$ and let $H:=(V,F)$. Let $$X:=\{v\in U\mid d_H(v)\geq 1\}\qquad\text{and}\qquad Y:=\{v\in V\setminus U\mid d_H(v)\geq 1\}$$
be the `shores' of the cut $F$. Let $\B':=\{B'\in \B\mid B'\subseteq U\}$. By assumption, $\B'$ is nonempty. Choose $B'\in \B'$ for which $B'\cap X$ is inclusionwise minimal. Let $B\in \B$ be such that $B\subseteq V\setminus U$. Observe that $B'\cap X$ is nonempty, since $B'$ must touch $B$. 

We now define a hitting set $S$ for $\B$ as follows. Add an arbitrary element $s$ from $B'\cap X$ to $S$. For each edge $xy\in E(X,Y)$ with $x\in X, y\in Y$, we add $x$ to $S$ if $x\not\in B'$, and otherwise we add $y$ to $S$. Hence $|S|\leq 1+|F|$. See Figure \ref{hittingset} for a depiction of the situation.

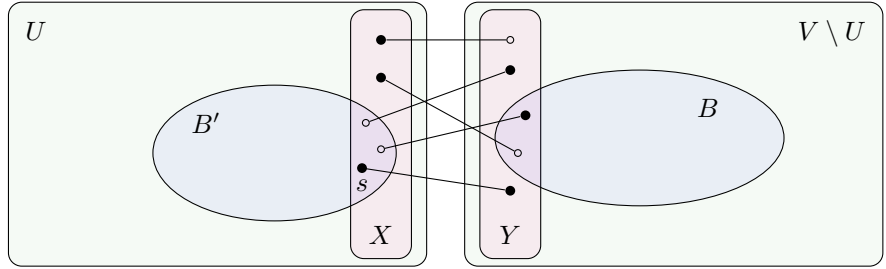
\begin{figure}
\centering
	\begin{tikzpicture}[punt/.style={draw,circle,inner sep=.9pt},
	                    grootpunt/.style={fill,circle,inner sep=1.3pt}]
		\draw[rounded corners=5pt,fill=green!30!gray!6] (0,0) rectangle (5.5,3.5);
		\node[anchor=base west] at (0.1,3) {$U$};
		\draw[rounded corners=5pt,fill=green!30!gray!6] (6,0) rectangle (11.5,3.5);
		\node[anchor=base east] at (11.4,3) {$V\setminus U$};
		\draw[rounded corners=5pt,fill=purple!60!gray!10] (4.5,0.1) rectangle (5.3,3.4);
		\node[anchor=base] at (4.9,0.3) {$X$};
		\draw[rounded corners=5pt,fill=purple!60!gray!10] (6.2,0.1) rectangle (7,3.4);
		\node[anchor=base] at (6.6,0.3) {$Y$};
		\draw[fill=blue,fill opacity=0.05] (3.5,1.5) ellipse (16mm and 9mm);
		\node at (2.6,1.9) {$B'$};
		\draw[fill=blue,fill opacity=0.05] (8.3,1.7) ellipse (19mm and 9mm);
		\node at (9.2,2.1) {$B$};
		\node[grootpunt] (A1) at (4.9,3) {};
		\node[punt] (B1) at (6.6,3) {};
		\draw (A1) -- (B1);
		\node[grootpunt] (B2) at (6.6,2.6) {};
		\node[punt] (A2) at (4.7,1.9) {};
		\draw (B2) -- (A2);
		\node[grootpunt] (B3) at (6.8,2) {};
		\node[punt] (A3) at (4.9,1.55) {};
		\draw (B3) -- (A3);
		\node[grootpunt] (A4) at (4.9,2.5) {};
		\node[punt] (B4) at (6.7,1.5) {};
		\draw (A4) -- (B4);
		\node[grootpunt,label=below:$s$,outer sep=-1pt] (s) at (4.65,1.3) {};
		\node[grootpunt] (B5) at (6.6,1.0) {};
		\draw (s) -- (B5);

	\end{tikzpicture}
\caption{The hitting set $S$ for the bramble $\B$ is formed by the black nodes.\label{hittingset}}
\end{figure}
\medskip

To prove that $S$ covers $\B$, consider any $A\in \B$. First observe that $A$ intersects $X\cup Y$. Otherwise, we would have $A\subseteq U\setminus X$ or $A\subseteq (V\setminus U)\setminus Y$ as $G[A]$ is connected. In the first case $G[A\cup B]$ is not connected and in the second case $G[A\cup B']$ is not connected. In both cases, this contradicts the fact that $\B$ is a bramble.

We consider the following three cases. 
\begin{itemize}
\item Case $A\cap Y=\emptyset$.\quad In this case $A\subseteq U$. By the choice of $B'$, we have either $B'\cap X\subseteq A\cap X$ and hence $s\in A$, or there exists an $x\in (X\cap A)\setminus B'$, which implies that $x\in S$. In both situations $S$ intersects $A$.
\item Case $A\cap X=\emptyset$.\quad In this case $A\subseteq (V\setminus U)$. Since $A$ touches $B'$, there must be an edge $e=xy$ with $x\in B'\cap X$ and $y\in A\cap Y$. By construction of $S$ we have $y\in S$. Hence, $S$ intersects $A$.
\item Case $A\cap X\neq \emptyset$ and $A\cap Y\neq \emptyset$.\quad Since $G[A]$ is connected, there is an edge $e=xy$ with $x\in X$, $y\in Y$ and $x,y\in A$. Since $S$ contains at least one endpoint from each edge in $F$, the set $S$ must intersect $A$.
\end{itemize}
We conclude that $S$ is a hitting set for $\B$ of size at most $|E(U,V\setminus U)|+1$, which proves the lemma.
\end{proof}

\noindent We now prove the main theorem.
\begin{proof}[Proof of Theorem \ref{main}]
Let $\B$ be a bramble in $G$ of maximum order. That is, $\order{\B} = \tw(G)+1$. Let $D'\geq 0$ be a divisor of positive rank and degree $\dgon(G)$. Among the effective divisors equivalent to $D'$, we choose $D$ such that $\supp(D)$ intersects a maximum number of sets in $\B$. If $\supp(D)$ is a hitting set for $\B$, then we are done:
\begin{equation}
\dgon(G)=\deg(D) \geq \supp(D) \geq \order{\B} > \tw(G).
\end{equation}

We may therefore suppose that $B\in\B$ is not intersected by $\supp(D)$ and let $v\in B$. Since $D$ has positive rank  and $D(v)=0$, it follows that $D$ is not $v$-reduced. Hence, by Lemma~\ref{chain}, there exist a chain $\emptyset\subsetneq U_1\subseteq\ldots\subseteq U_k\subseteq V\setminus{\{v\}}$ and a sequence of equivalent effective divisors $D_0:=D,D_1,\ldots,D_k$ such that $D_k$ is $v$-reduced and for every $i=1,\ldots, k$ we have $D_i=D_{i-1}-Q(G)\one_{U_i}$. Since $D$ has positive rank, $\supp(D_k)$ contains $v$ and hence intersects $B$.

Let $i\leq k$ be the smallest index such that there is a $B'\in \B$ that is covered by $\supp(D_0)$ but not by $\supp(D_i)$. Such an index exists, since otherwise $\supp(D_k)$ intersects more members of $\B$ then $\supp(D_0)$, contradicting our choice of $D=D_0$. From  $B'\cap \supp(D_{i-1})\neq \emptyset$ and $B'\cap \supp(D_{i})=\emptyset$ it follows by Lemma \ref{oneside} that $B'\subseteq U_i$. 

Again by our choice of $D$, the set $\supp(D_{i-1})$ does not intersect $B$. Since $\supp(D_k)$ does intersect $B$, there is an index $j\geq i$ such that $B\cap \supp(D_{j-1})=\emptyset$ and $B\cap \supp(D_{j})\neq\emptyset$. Hence, since $D_{j-1}=D_j-Q(G)\one_{V\setminus U_j}$, we have $B\subseteq V\setminus U_j\subseteq V\setminus U_i$ by Lemma \ref{oneside}. 

Since $B\subseteq V\setminus U_i$ and $B'\subseteq U_i$, it follows by Lemma \ref{brambleseparation} that $|E(U_i,V\setminus U_i)|\geq \order{\B}-1$. Since 
\begin{equation}
\deg (D_{i-1})\geq \sum_{u\in U}D_{i-1}(u)\geq |E(U_i,V\setminus U_i)|,
\end{equation}
it follows that $\dgon(G)=\deg(D) = \deg(D_{i-1}) \geq \order{\B} - 1 = \tw(G)$.
\end{proof}

\section{Examples}
We first discuss some classes of graphs for which equality holds in $\tw(G)\leq \dgon(G)$.

\begin{example}
Let $G=(V,E)$ be a \emph{simple} graph with at least one edge. Let $g:=|E|-|V|+1$ be its circuit rank. If $g=0$, then $G$ is a tree and $\tw(G)=\dgon(G)=1$. If $g\in\{1,2\}$, we have $\tw(G)=\dgon(G)=2$. 
\end{example}

\begin{example}[Complete $k$-partite graph]
Let $G=(V,E)$ be a complete $k$-partite graph, $k\geq 2$, with partition $V=V_1\cup\cdots\cup V_k$, where $n_i:=|V_i|\geq 1$. We may assume that $n_1\leq n_2\leq \cdots\leq n_k$. 

For $i=1,\ldots, k$ let $s_i\in V_i$ and consider the bramble $\B:=\{\{s_1\},\ldots,\{s_k\}\}\cup\{\{u,v\}\mid uv\in E\}$. A set $S\subseteq V$ is a hitting set for $\B$ if and only if $s_1,\ldots,s_k\in S$ and there is at most one index $i$ such that $V_i\not\subseteq S$. Hence a hitting set of minimal cardinality is given by $S:=V_1\cup\cdots\cup V_{k-1}\cup\{s_k\}$. Hence $\tw(G)\geq \order{\B}-1=n_1+\cdots+n_{k-1}$.

Let $D:=\one_{V_1\cup\cdots\cup v_{k-1}}$. For every $v\in V_k$, the divisor $D+Q(G)\one_{\{v\}}$ is effective. Hence $D$ has rank at least one and therefore $\dgon(G)\leq n_1+\cdots+n_{k-1}$.

We conclude that $\tw(G)=\dgon(G)=n_1+\cdots+n_{k-1}$. In particular we have $\dgon(K_n)=n-1$ for the complete graph on $n$ vertices, and $\dgon(K_{m,n})=m$ for the complete bipartite graph with colour classes of sizes $m\leq n$. For the octahedron $K_{2,2,2}$ we find $\dgon(K_{2,2,2})=4$.
\end{example}

\begin{example}[Rectangular grid]
Let $m\leq n$ be integers and let $G=(V,E)$ be the $(m+1)\times (n+1)$ rectangular grid. That is, $V:=[m+1]\times [n+1]$ and two vertices $(a,b)$ and $(a',b')$ form an edge if $|a-a'|+|b-b'|=1$. 

Let $A:=[m+1]\times \{n+1\}$ and $B:=\{m+1\}\times [n]$. For $i\in [m]$ and $j\in [n]$ consider the `cross' 
$$C_{ij}:=\{(a,b)\in [m]\times [n]\mid \text{$a=i$ or $b=j$}\}.$$
It is easy to see that $\B:=\{A,B\}\cup\{C_{ij}\}_{i\in [m], j\in [n]}$ is a bramble. Any hitting set for the $C_{ij}$ contains at least $m$ elements from $[m]\times [n]$ (one from each row). Hence, since $A$, $B$, and $[m]\times [n]$ are disjoint, the order of $\B$ is at least $m+2$.

On the other hand, take the divisors $D_i:=\one_{[m+1]\times\{i\}}$ for $i=1,\ldots, m+1$. These divisors are equivalent, since $D_{i+1}=D_i-Q(G)\one_{[m+1]\times [i]}$ for $i=1,\ldots, n$. Hence, since every $(a,b)\in V$ is in the support of some $D_b$, the rank of $D_1$ is at least one. Hence, we can conclude that $m+1\leq \tw(G)\leq \dgon(G)\leq m+1$, and hence $\dgon(G)=\tw(G)=m+1$.   
\end{example}

An interesting family for which we do not know the answer is the following. Let $Q_n$ be the $n$-dimensional cube. That is $Q_n$ is the graph with vertex set $\{0,1\}^n$ and two vertices $x,y$ are connected by an edge if $x$ and $y$ differ in exactly one coordinate. It is clear that $\dgon(Q_n)\leq 2^{n-1}$ and we believe that equality holds. On the other hand, $\tw(Q_n)=\Theta(\frac{2^n}{\sqrt{n}})$, see \cite{SunilKavitha2006}.


\section{Metric graphs}
In this section, we show that for any metric graph $\Gamma$ with underlying connected graph $G$, there is a subdivision $H$ of $G$, such that $\dgon(H)\leq \dgon(\Gamma)$. Hence, the treewidth is also a lower bound for metric graphs: 
\begin{equation}\tw(G)\leq \tw(H)\leq \dgon(H)\leq \dgon(\Gamma).\end{equation}

Let $G$ be a connected graph with vertex set $V$ and edge set $E$. Let $l:E\to \R_{>0}$ be a length function on the edges. Associated to the pair $(G,l)$ is the \emph{metric graph} $\Gamma$ which is the compact connected metric space obtained by identifying edge $e=\{u,v\}$ with a segment of length $l(e)$. The free abelian group on the points of $\Gamma$ is denoted $\Div(\Gamma)$ and the elements of $\Div(\Gamma)$ are called \emph{divisors} on $\Gamma$. For $D=c_1v_1+\cdots+c_kv_k\in \Div(\Gamma)$, with $c_1,\ldots, c_k\in \Z$ and $v_1,\ldots, v_k\in \Gamma$, the \emph{degree} of $D$ is defined as $\deg(D):=c_1+\cdots+c_k$. The divisor is \emph{effective} if $c_1,\ldots, c_k\geq 0$. The support of $D$ is denoted $\supp(D)$.

Let $f$ be a piecewise linear continuous function $f$ on $\Gamma$ with integral slopes. For each $v\in \Gamma$, let $c_v$ be the sum of the outgoing slopes of $f$ at $v$. So $c_v\neq 0$ only for breakpoints of $f$. The associated divisor is denoted $\todiv(f):=\sum_{v\in \Gamma} c_vv$ and is called a \emph{principal} divisor. The set of principal divisors is denoted $\Prin(\Gamma)$ and is a subgroup of $\Div(\Gamma)$. Two divisors are \emph{equivalent} if their difference is a principal divisor. 

For a point $s\in \Gamma$, we say that a divisor $D$ \emph{covers} $s$ if there exists an effective divisor equivalent to $D$ with $v$ in its support. The \emph{gonality} $\dgon(\Gamma)$ is defined as the minimum degree of a divisor that covers every point $v\in \Gamma$. It was proven in \cite{Luo2011} that if $D$ covers every $v\in V$, then $D$ covers every $v\in \Gamma$. However, we will not use that result here.

We denote by $\Div_V(\Gamma)$ the subgroup of divisors with support contained in $V$. We identify the elements of $\Div_V(\Gamma)$ with the corresponding elements on $\Z^V$. Hence, the divisors in $\Div_V(\Gamma)$ can also be seen as divisors on $G$. By $\clattice(\Gamma)$ we denote the set of continuous piecewise linear functions $f$ on $\Gamma$ with integral slopes and $\todiv(f)\in \Div_V(\Gamma)$. This last condition simply means that $f$ is linear on each edge of $\Gamma$. Observe that any two divisors $D,D'\in \Div_V(\Gamma)$ are equivalent if and only if $D-D'=\todiv(f)$ for some $f\in \clattice(\Gamma)$. 

We fix an arbitrary orientation on $G$. We define a map $\phi:\clattice(\Gamma)\to \Z^E$ by setting $\phi(f)(e)$ to be the slope of $f$ on edge $e$ (in the forward direction). Let $g:E\to \Z$. It is easy to see that $g$ is in the image of $\phi$ if and only if
\begin{equation}
\sum_{e\in E} g(e)l(e)\chi_C(e)=0\quad \text{for every cycle $C$ in $G$}.
\end{equation} 

Observe that $\todiv(f)=-M \phi(f)$, where $M$ is the signed vertex-edge incidence matrix of $G$. Also observe that for $l=\one$, the function that is identically one, $g$ is in the image of $\phi$ if and only if $g=M^\transp x$ for some $x\in \Z^V$ by Lemma \ref{clattice}. Hence $\todiv(f)=-M\phi(f)=-MM^\transp x=-Q(G)x$ for some $x\in \Z^v$. In other words, two divisors in $\Div_V(\Gamma)$ are equivalent if and only if they are equivalent as divisors on $G$. 

\begin{theorem}
Let $\Gamma$ be the metric graph associated to $(G,l)$. Then there is a subdivision $H$ of $G$ such that $\dgon(\Gamma)\geq \dgon(H)$. 
\end{theorem}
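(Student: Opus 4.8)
The plan is to fix a divisor $D$ on $\Gamma$ of degree $d:=\dgon(\Gamma)$ and positive rank, and to manufacture from it a subdivision $H$ of $G$ together with a divisor $D_H$ on $H$ of degree $d$ that covers, \emph{combinatorially on $H$}, every original vertex $v\in V(G)$. Corollary~\ref{rankdetermining} then shows that $D_H$ has positive rank on $H$, so that $\dgon(H)\le\deg(D_H)=d=\dgon(\Gamma)$, which is exactly the assertion. Thus the whole content lies in transporting the metric covering witnessed by $D$ to an honest combinatorial covering on a suitable subdivision.

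Since $D$ covers every point of $\Gamma$, for each $v\in V(G)$ I fix an effective divisor $D_v\sim D$ with $v\in\supp(D_v)$ and a continuous piecewise-linear function $f_v$ with integral slopes realizing it, so that $D_v=D+\todiv(f_v)$. There are only finitely many such functions, each with finitely many breakpoints, so I first pass to the subdivision $H_0$ of $G$ whose vertices are $V(G)$ together with $\supp(D)$, all the $\supp(D_v)$, and all breakpoints of all $f_v$. On every edge of $H_0$ each $f_v$ is then linear with some integral slope, and $D$ as well as every $D_v$ are supported on $V(H_0)$; I set $D_H:=D$.

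The delicate point is that a metric firing need not be a combinatorial one. Write $m^{v}_{e}$ for the integral slope of $f_v$ on an edge $e$ of $H_0$ of length $\ell_e$. Recalling $\todiv(f_v)=-M\phi(f_v)$ with $\phi(f_v)=(m^v_e)_e$, making $D\sim D_v$ \emph{combinatorially} on a subdivision amounts, via Lemma~\ref{clattice}, to the integral vector $(m^v_e)_e$ lying in the cut lattice, i.e.\ being orthogonal to every cycle; whereas single-valuedness of $f_v$ on $\Gamma$ only furnishes the \emph{length-weighted} relation $\sum_e m^v_e\,\ell_e\,\chi_C(e)=0$ for each cycle $C$. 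To repair this I would subdivide each edge $e$ of $H_0$ into $n_e$ equal segments. This leaves every slope $m^v_e$ unchanged but multiplies its combinatorial contribution by $n_e$, so the resulting graph $H$ admits the desired integral firings precisely when $\sum_e m^v_e\,n_e\,\chi_C(e)=0$ for all $v$ and all cycles $C$ of $H_0$ (the cycle space being unchanged under subdivision).

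Here is the crux, and the step I expect to be the main obstacle. These are finitely many homogeneous linear equations with integer coefficients $m^v_e\chi_C(e)$, and the strictly positive real vector $(\ell_e)_e$ is a solution of all of them at once. Since the coefficients are integral, the solution space is a rational subspace; as it meets the open positive orthant it meets it in a rational point, which after clearing denominators gives strictly positive integers $(n_e)_e$ satisfying the same relations. With this choice of $H$, each integral slope vector $(m^v_e)$ lies in $\clattice(H)$, so by Lemma~\ref{clattice} there is $x^v\in\Z^{V(H)}$ with $M(H)^{\transp}x^v=(m^v_e)$; then $D-Q(H)x^v=D-M(H)M(H)^{\transp}x^v=D+\todiv(f_v)=D_v$, an effective divisor on $H$ with $v\in\supp(D_v)$. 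Hence $D_H$ covers every $v\in V(G)$ combinatorially on $H$ and has degree $d$, and Corollary~\ref{rankdetermining} finishes the proof as above. Everything except the passage from the positive real length vector to the positive integer vector $(n_e)$ is essentially bookkeeping; that rationality argument is where the real idea lies.
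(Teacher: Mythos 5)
Your proposal is correct and follows essentially the same route as the paper: subdivide at the supports (equivalently, the breakpoints of the $f_v$), observe that the length vector solves the finite rational homogeneous system coming from the cycle conditions, extract a strictly positive integer solution by density of rational points in the rational solution space, subdivide accordingly, and finish with Corollary~\ref{rankdetermining}. The only cosmetic difference is that the paper phrases the integer solution as a new length function $l''$ before subdividing, whereas you phrase it directly as subdivision multiplicities $n_e$; the argument is the same.
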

\begin{proof}
Let $D$ be a minimum degree divisor covering $\Gamma$. In particular, $D$ covers every $v\in V$. Hence, for every $v\in V$, there is an effective divisor $D_v$ equivalent to $D$ with $v$ in its support. Let $V':=V\cup \supp(D)\cup\bigcup_{v\in V}\supp(D_v)$. Let $\Gamma'$ be obtained by subdividing $\Gamma$ at the points in $V'\setminus V$. Denote by $G'$ and $l'$ the corresponding underlying graph and length function so that $\Gamma'$ is the metric graph associated with $(G',l')$. The divisor $D$ and the divisors $D_v$ can now be seen as equivalent elements of $\Div_{V'}(\Gamma')$. 

For all $v\in V$, let $f_v\in \clattice(\Gamma')$ be such that $D-\todiv(f_v)=D_v$. It follows that $y=l'$ is a solution to the system 
\begin{equation}\label{cycle}
\sum_{e\in G'}y(e)\phi(f_v)(e)\chi_C(e)=0\quad \text{for every cycle $C$ in $G$ and every $v\in V$}.
\end{equation} 

Since (\ref{cycle}) is a (finite) rational linear system in $y$, and since $l'>0$ is a solution, the system also has a solution $l''\in \Z_{>0}^E$. It follows that the $D_v$ are equivalent divisors on the metric graph associated with $(G', l'')$. Subdividing every edge $e$ of $G'$ into $l''$ parts to obtain a graph $H$, we can view the $D_v$ as equivalent divisors in $\Div_{V'}(\Gamma'')$, where $\Gamma''$ is the metric graph associated to $(H,\one)$ in which all edges have length one. Finally, this implies that the $D_v$ are also equivalent as divisors of $H$. It follows that for any $v\in V$, the divisor $D_v\in \Div(H)$ covers $V$, and hence by Corollary \ref{rankdetermining} has positive rank.
\end{proof}
The following corollary is immediate.
\begin{corollary}
Let $\Gamma$ be a metric graph with underlying connected graph $G$. Then $\tw(G)\leq \dgon(\Gamma)$.
\end{corollary}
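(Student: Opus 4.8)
The plan is to compose three inequalities, precisely the chain $\tw(G)\leq \tw(H)\leq \dgon(H)\leq \dgon(\Gamma)$ that was previewed at the start of this section. The rightmost inequality is exactly the content of the theorem just proven: it produces a subdivision $H$ of $G$ with $\dgon(H)\leq \dgon(\Gamma)$. The middle inequality is an application of the main theorem, Theorem \ref{main}, to the graph $H$; this is legitimate because $H$, being a subdivision of the connected graph $G$, is itself finite and connected.

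It then remains only to justify the leftmost inequality $\tw(G)\leq \tw(H)$. For this I would invoke the monotonicity of treewidth under minors, as recalled in the remark following the treewidth duality theorem: contracting each of the subdivided edges of $H$ recovers $G$, so $G$ is a minor of $H$ and therefore $\tw(G)\leq \tw(H)$. Alternatively, one may cite the explicit observation in that same remark that $\tw(G)\leq \tw(H)$ holds whenever $H$ is a subdivision of $G$. Composing the three bounds yields $\tw(G)\leq \dgon(\Gamma)$.

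I do not anticipate any genuine obstacle, since every ingredient is already in place and the corollary is a one-line composition; indeed this is why the paper flags it as immediate. The only point requiring a moment's care is confirming that $H$ is connected so that Theorem \ref{main} applies, which is automatic because subdivision preserves connectivity.
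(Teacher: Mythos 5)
Your proposal is correct and is exactly the paper's argument: the paper previews the very chain $\tw(G)\leq \tw(H)\leq \dgon(H)\leq \dgon(\Gamma)$ at the start of Section 4, combining the subdivision theorem, Theorem \ref{main}, and minor-monotonicity of treewidth. Nothing further is needed.
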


\section{Other notions of gonality}
Other notions of gonality of a graph $G$ have been proposed by Caporaso \cite{Caporaso2012} and by Cornelissen, Kato, and Kool in \cite{CornelissenKatoKool2012}. These notions are based on harmonic morphisms from $G$ to a tree. Here we will show that treewidth is also a lower bound for the gonality in these cases. Again, we assume that our graphs are  connected, finite, and loopless (but possibly with multiple edges).

We follow terminology from \cite{BakerNorine2009}.
A \emph{morphism from $G=(V,E)$ to $G'=(V',E')$}, is a map $\phi:V\cup E\to V'\cup E'$ such that 
\begin{itemize}
\item[(i)] $\phi(V)\subseteq V'$,
\item[(ii)] if $e\in E(u,v)$, then either $\phi(e)=\phi(u)=\phi(v)$, or $\phi(e)\in E'(\phi(u),\phi(v))$.
\end{itemize}
If $\phi(E)\subseteq E'$, then $\phi$ is called a \emph{homomorphism}. We call a morphism $\phi$ \emph{harmonic} if 
\begin{itemize}
\item[(iii)] for every $v\in V$ there exists a nonnegative integer $m_\phi(v)$ such that
\begin{equation}\label{harm}
m_\phi(v)=|\phi^{-1}(e')\cap E(v)|\quad\text{for every $e'\in E'(\phi(v))$,}
\end{equation}
\end{itemize}
\noindent and \emph{non-degenerate} if in addition
\begin{itemize}
\item[(iv)] $m_\phi(v)\geq 1\quad$ for every $v\in V$.
\end{itemize}

\noindent If $\phi$ is harmonic, then there is a number $\deg(\phi)$ such that for every edge $e'\in E'$ and every $v'\in V'$
$$
\deg(\phi)=|\phi^{-1}(e')|=\sum_{v\in \phi^{-1}(v')}m_\phi(v).
$$

\begin{lemma}
Let $G=(V,E)$ and $G'=(V',E')$ be graphs and let $\phi:G\to G'$ be a non-degenerate harmonic morphism. Then $\dgon(G)\leq \dgon(G')\deg(\phi)$. In particular, $\dgon(G)\leq \deg(\phi)$ when $G'$ is a tree.
\end{lemma}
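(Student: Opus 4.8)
The plan is to use the \emph{pullback} of divisors along $\phi$ and to show that it turns a positive-rank divisor of minimal degree on $G'$ into a positive-rank divisor on $G$ whose degree has been multiplied by $\deg(\phi)$. I would define $\phi^{*}\colon\Div(G')\to\Div(G)$ by $\phi^{*}(D')(v):=m_{\phi}(v)\,D'(\phi(v))$ for $v\in V$. Two properties are immediate from this formula: the map $\phi^{*}$ is linear, and it preserves effectiveness, since $m_{\phi}(v)\geq 0$. Moreover, summing over the fibres of $\phi$ and using $\deg(\phi)=\sum_{v\in\phi^{-1}(v')}m_{\phi}(v)$ yields the multiplicativity of degree,
\[
\deg\!\big(\phi^{*}(D')\big)=\sum_{v\in V}m_{\phi}(v)\,D'(\phi(v))=\sum_{v'\in V'}\deg(\phi)\,D'(v')=\deg(\phi)\deg(D').
\]

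The key step, and the one I expect to be the main obstacle, is to verify that $\phi^{*}$ respects linear equivalence. For this I would prove the identity
\[
\phi^{*}\big(Q(G')x'\big)=Q(G)\big(x'\circ\phi\big)\qquad\text{for every }x'\in\Z^{V'},
\]
where $x'\circ\phi\in\Z^{V}$ denotes the composition $v\mapsto x'(\phi(v))$, taken with \emph{no} multiplicities. To establish it, I would evaluate the right-hand side at a vertex $v$ with $\phi(v)=v'$ and split the edges incident to $v$ into ``vertical'' edges (those mapped by $\phi$ to the vertex $v'$) and ``horizontal'' edges (those mapped to edges of $G'$). The vertical edges contribute nothing, because $x'\circ\phi$ is constant along them; the horizontal edges are grouped according to their image edge, and here the harmonicity condition \eqref{harm} is precisely what is needed, since for each $e'\in E'(v')$ there are exactly $m_{\phi}(v)$ preimage edges at $v$. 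The horizontal contribution then collapses to $m_{\phi}(v)\,(Q(G')x')(v')$, which is exactly $\phi^{*}(Q(G')x')(v)$. Consequently, if $D_{1}'\sim D_{2}'$ then their difference is some $Q(G')x'$, so $\phi^{*}(D_{1}')-\phi^{*}(D_{2}')=Q(G)(x'\circ\phi)$, and hence $\phi^{*}(D_{1}')\sim\phi^{*}(D_{2}')$.

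With these properties the conclusion follows quickly. I would choose a divisor $D'$ on $G'$ of positive rank and degree $\dgon(G')$. For every $v\in V$, set $v':=\phi(v)$; since $D'$ has positive rank it covers $v'$, so there is an effective $F'\sim D'$ with $v'\in\supp(F')$. Then $\phi^{*}(F')$ is effective, equivalent to $\phi^{*}(D')$, and satisfies $\phi^{*}(F')(v)=m_{\phi}(v)\,F'(v')\geq 1$, because $\phi$ is \emph{non-degenerate} (so $m_{\phi}(v)\geq 1$) and $F'(v')\geq 1$. Thus $\phi^{*}(D')$ covers every vertex of $G$ and therefore has positive rank, so
\[
\dgon(G)\leq\deg\!\big(\phi^{*}(D')\big)=\deg(\phi)\deg(D')=\deg(\phi)\,\dgon(G').
\]
Finally, when $G'$ is a tree we have $\dgon(G')=1$, since a single chip can be moved to every vertex of a tree, and the bound specialises to $\dgon(G)\leq\deg(\phi)$. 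The non-degeneracy hypothesis enters only in the last paragraph, to keep the distinguished vertex in the support of the pulled-back divisor; everything else rests on the equivalence-preserving identity, which is where the harmonic structure of $\phi$ is genuinely used.
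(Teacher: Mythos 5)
Your proposal is correct and follows essentially the same route as the paper: the pullback $\phi^{*}(D)(v)=m_{\phi}(v)D(\phi(v))$, the degree identity $\deg(\phi^{*}(D))=\deg(\phi)\deg(D)$, and the key equivalence-preserving identity $\phi^{*}(Q(G')y)=Q(G)x$ with $x=y\circ\phi$, which is exactly the computation the paper sketches. You simply spell out the verification of that identity (splitting into vertical and horizontal edges) in more detail than the paper does, and the argument is sound.
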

\begin{proof}
For any divisor $D\in \Div(G')$, define the divisor $\phi^*(D)\in \Div(G)$ by $\phi^*(D)(v):=m_\phi(v)D(\phi(v))$. Observe that $\deg(\phi^*(D))=\deg(D)\deg(\phi)$, and that its support is $\phi^{-1}(\supp(D))$ by non-degeneracy of $\phi$. When $D$ is effective, then so is $\phi^*(D)$. 

It is easy to see that if $D,D'\in \Div(G')$ are equivalent, then $\phi^*(D)$ and $\phi^*(D')$ are equivalent as well. Indeed, for any $y\in \Z^{V'}$ we have $\phi^*(Q(G')y)=Q(G)x$, where $x(u):=y(\phi(u))$.

Hence, if $D\in\Div(G')$ is an effective divisor of positive rank in $G'$, then $\phi^*(D)$ is an effective divisor of positive rank in $G$ with $\deg(\phi^*(D))=\deg(D)\deg(\phi)$. 
\end{proof} 
 
The notion of harmonic morphism can be extended to \emph{indexed harmonic morphism} by associating to every edge $e\in \phi^{-1}(E')$ a positive integer $r_e$ and counting in (\ref{harm}) every edge $e\in \phi^{-1}(e')$ with multiplicity $r_e$. Hence, an indexed harmonic morphism $G\to G'$ corresponds to a harmonic morphism $H\to G'$, where $H$ is obtained from $G$ by replacing every edge $e$ by $r_e$ parallel edges which are mapped to the same edge as the original edge $e$.

In \cite{Caporaso2012}, Caporaso defined the gonality of a graph $G$ as the minimum degree of a non-degenerate indexed harmonic morphism (with some additional restriction) from $G$ to a tree. Hence it follows that this measure of gonality is lower bounded by $\dgon(H)$ for some $H$ obtained from $G$ by adding parallel edges, and hence by $\tw(H)=\tw(G)$.

In \cite{CornelissenKatoKool2012}, Cornelissen, Kato and Kool define the \emph{stable gonality} $\sgon(G)$ of $G$ to be the minimum degree of an indexed harmonic homomorphism from a \emph{refinement} of $G$ to a tree $T$. Note that a harmonic homomorphism is automatically non-degenerate. A refinement of $G$ is a graph obtained from $G$ by subdividing edges and adding leaves (nodes of degree 1). Therefore $\sgon(G)$ is lower bounded by $\dgon(H)$ for some graph $H$ obtained from $G$ by subdividing edges, adding leaves and adding parallel edges. Hence, $\sgon(G)\geq \dgon(H)\geq \tw(H)\geq \tw(G)$.

For a comparison of the different notions of gonality, we refer the reader to \cite{CornelissenKatoKool2012}.   

\section{Acknowledgements}
We would like to thank Maarten Derickx. He was the first to prove that the gonality of the $n\times m$ grid equals $\min(m,n)$ (unpublished). His method inspired us to conjecture and prove that $\tw(G)\leq \dgon(G)$. The second author would also like to thank Jan Draisma for some stimulating discussions on graph gonality.

\bibliographystyle{plain}
\bibliography{gonality,treewidth,graphs}

\end{document}